\theoremstyle{plain} 
\newtheorem{theorem}{Теорема}
\newtheorem{theoMR}{Теорема MR\hspace{-4pt}}
\newtheorem{corolMR}{Следствие MR\hspace{-4pt}} 
\newtheorem{theoKh}{Теорема Kh\hspace{-4pt}} 
\newtheorem{corolKh}{Следствие Kh\hspace{-4pt}}
\newtheorem{theoB}{Теорема B\hspace{-4pt}}
\newtheorem{lemma}{Лемма}[section]
\newtheorem{propos}{Предложение}[section] 
\theoremstyle{definition}
\newtheorem{definition}{Определение}[section] 
\newtheorem{remark}{Замечание}[section]
\theoremstyle{plain} 
\newtoks\thehProclaim 
\newtheorem*{Proclaim}{\the\thehProclaim}
\theoremstyle{definition} 
\newtoks{\thehRemark} \newtheorem*{Remark}{\the\thehRemark}
\renewcommand{\leq}{\leqslant} 
\renewcommand{\geq}{\geqslant}
\newcommand{\mc}{\mathcal} 
\newcommand{\rad}{\text{\tiny\rm rad}}
\newcommand{\bal}{\rm {bal}}
\newcommand{\Bal}{\rm {Bal}}
\newcommand{\RR}{\mathbb{R}} 
\newcommand{\CC}{\mathbb{C}} 
\newcommand{\NN}{\mathbb{N}} 
\newcommand{\ZZ}{\mathbb{Z}}
\newcommand{\DD}{\mathbb{D}} 
\newcommand{\e}{\varepsilon}
\newcommand{\const}{{\rm const}}
\DeclareMathOperator{\clos}{clos} 
\DeclareMathOperator{\Int}{int}
\DeclareMathOperator{\Har}{har} 
\DeclareMathOperator{\Hol}{Hol}
\DeclareMathOperator{\dens}{dens} 
\DeclareMathOperator{\Meas}{Meas} 
\DeclareMathOperator{\Exp}{Exp} 
\DeclareMathOperator{\Zero}{Zero} 
\DeclareMathOperator{\sbh}{sbh} 
\DeclareMathOperator{\dsbh}{\text{$\delta${\rm -sbh}}} 
\DeclareMathOperator{\supp}{supp} 
\DeclareMathOperator{\type}{type} 
\DeclareMathOperator{\ord}{ord}
\DeclareMathOperator{\up}{\text{\rm \tiny up}}
 \DeclareMathOperator{\lw}{\text{\rm \tiny low}}
\DeclareMathOperator{\rght}{\text{\rm \tiny right}}
 \DeclareMathOperator{\lft}{\text{\rm \tiny left}}
\DeclareMathOperator{\strip}{strip}
\DeclareMathOperator{\dd}{\,{\mathrm d\!}}
\DeclareMathOperator{\ind}{ind} 
\renewcommand{\Re}{{\rm Re \,}}
\renewcommand{\Im}{{\rm Im \,}}
\DeclareMathOperator{\Spf}{Spf} 
\DeclareMathOperator{\spf}{spf} 
\DeclareMathOperator{\width}{width}
\begin{document} 
\title[Выметание конечного рода на систему лучей]{Выметание мер и
субгармонических функций на систему лучей. III. 
Рост целых функций экспоненциального типа вдоль прямой}
	
\author{Б.\,Н.~Хабибуллин, А.\,Е.~Егорова} 
	
\address{факультет математики и ИТ\\ Башкирский государственный университет\\ 450074, г. Уфа\\ ул. Заки Валиди, 32\\ Башкортостан\\
Россия}
	
\email{Khabib-Bulat@mail.ru}
	
\subjclass[2010]{Primary 30D15; Secondary 30D35, 41A30, 31A05}
	
\keywords{целая функция, последовательность нулей, субгармоническая функция, мера Рисса, выметание}	
\begin{abstract} Во второй части настоящей работы была разработана 
техника выметания конечного рода $q=0,1,2,\dots$ меры (заряда) 
и ($\delta$-)су\-б\-г\-а\-р\-м\-о\-н\-и\-ч\-е\-с\-к\-ой функции конечного порядка на произвольную замкнутую систему лучей $S$ с вершиной в нуле на комплексной плоскости $\CC$. В настоящей третьей части работы мы используем только случай $q=1$, когда $S$ --- пара противоположно направленных лучей, т.\,е., $S$, как точечное множество, --- прямая, а выметание производится из обеих сторон этой прямой. При этом рассматриваются меры и субгармонические функции конечного типа при порядке $1$. Такое двустороннее выметание рода $1$ применяется к вопросам нетривиальности {\it весовых классов целых функций экспоненциального типа\/} $E$, выделяемых лишь ограничением на их рост вдоль прямой; к полному описанию подпоследовательностей нулей для таких классов $E$; к существованию целых функций-мультипликаторов $h$ для целых функций экспоненциального типа $g$, ограничивающих при умножении их рост классом $E$, т.\,е. $fh\in E$;   
к возможности представления мероморфной функции в виде частного функций из $E$. Истоки исследования --- в классической теореме Мальявена\,--\,Рубела об  условиях существования целой функции экспоненциального типа, обращающейся в нуль на заданной последовательности {\it положительных чисел.\/} Исследования эти также, в определенном смысле, параллельны знаменитым теоремам Бёрлинга\,--\,Мальявена о мультипликаторе и о радиусе полноты.

Библиография:  32 названия 
\end{abstract}
	
\thanks{Исследование выполнено за счёт гранта Российского
научного фонда (проект № 18-11-00002).}	
\date{}
	
\maketitle

\section{Введение}\label{s10}
\subsection{Цели}

Основная цель настоящей третьей и последующей четвёртой частей исследования  --- описать условия существования ненулевой  целой функции экспоненциального типа (пишем \textit{ц.ф.э.т.})
 $f\neq 0$ на {\it комплексной плоскости\/} $\CC$, обращающейся в нуль на заданной последовательности точек ${\sf Z}=\{{\sf z}_k\}\subset \CC$, с ограничениями на ее рост вдоль прямой. При этом будут существенно использованы  методы и техника первой \cite{KhI} и, в особенности, второй \cite{KhII} частей нашего исследования, касающиеся выметания рода $1$ мер и субгармонических функций на прямую. По традиции, восходящей к Л.\,А.~Рубелу и  П.~Мальявену, в качестве такой прямой, как правило,  выбираем {\it мнимую ось\/} $i\RR\subset \CC$, где $\RR\subset \CC$ --- {\it вещественная ось.\/} Такого рода задача была полностью решена в совместной работе Л.\,А.~Рубела и П.~Мальявена \cite[теорема 4.1]{MR} для \underline{{\it положительной\/}} последовательности точек ${\sf Z} \subset \RR^+:=\{x\in \RR\colon x\geq 0\}$, и с ограничением сверху $|f|\leq |g|$ на  $i\RR$, 
где  $g$ ---  заранее заданная ц.ф.э.т., обращающаяся в нуль на некоторой, вообще говоря, другой последовательности  ${\sf W}\subset \RR^+$. Этой задаче посвящена одна из основных частей монографии Л.\,А.Рубела в сотрудничестве с Дж.\,Э.~Коллиандром \cite[раздел 22]{RC} 1996 г. Но ещё в работах первого из авторов  1988--91 гг. все эти результаты были уже перенесены на произвольные 
\underline{{\it комплексные\/}}  последовательности ${\sf Z}, {\sf W} \subset \CC$
с ограничением сверху вида $\log |f|\leq p$ вдоль $i\RR$ через специальную 
 \textit{субгармоническую функцию-мажоранту\/}  $p$:
для $p(z)=\e |z|$ в \cite[основная теорема]{Kha88}  или с $p(z)=\log |g(z)|+ \e|z|$  со сколь угодно малым числом $\varepsilon >0$  \cite[теорема 1]{KhaD88}, \cite[основная теорема]{Kha89}. При  жестком требовании  $\e=0$  и {\it условии Картрайт\/}
вдоль $i\RR$ на функцию $g$, заключающемся в конечности интеграла $J\bigl(\log^+|g|\bigr)$, уже использованного в\footnote{По-прежнему ссылки над знаками бинарных отношений означают, что эти соотношения как-то связаны с приведёнными ссылками.}   \cite [4.6, (4.16)]{KhII}: 
\begin{equation}\label{fK:abp+}
J(v):=J_{-\frac{\pi}{2},\frac{\pi}{2}}^{[1]}(1, +\infty;v)
\overset{\text{ \cite[4.6,(4.16)]{KhII}}}{:=}
\int_1^{+\infty} \frac{v(-iy)+v(iy)}{y^2} \dd y,
\end{equation}
где $\log^+:=\max \{0,\log \}$, эта проблематика полностью вписывается в  знаменитые теоремы Бёрлинга\,--\,Мальявена о мультипликаторе и о радиусе полноты \cite[3.2]{Khsur}--\cite{MasNazHav05} и в определенном смысле окончательно  ими решается не только с ограничениями вдоль $i\RR$, но и с ограничениями на тип функции $f$ при порядке $1$. Этот простой, конечно же, только после теорем Бёрлинга\,--\,Мальявена,  случай будет в расширенном варианте  обсуждаться в намечаемых последующих статьях, продолжающих цикл работ, состоящих из настоящей статьи и тесно взаимосвязанных с ней статей  \cite{KhI}, \cite{KhII}.  
 
Наиболее сильные на начало  1990-х гг. результаты  для $\e=0$, но с функцией $g$, не удовлетворяющей условию Картрайт,   получены в \cite[основная теорема]{kh91AA} при условии расположения последовательностей  ${\sf Z}, {\sf W} \subset \CC$ вне какой-нибудь пары открытых углов, содержащих $i\RR\setminus \{0\}$. Аналогичные более общие результаты для случая  субгармонической функции-мажоранты  $p$ конечного типа при порядке $1$, гармонической вне такой же пары углов, затрагивались в диссертации  \cite[теорема 2.4.1]{KhDD92}, но в научных журналах они не публиковались. 
В данной третьей части работы мы их существенно развиваем на основе выметания рода $1$ на мнимую ось $i\RR$ мер и субгармонических функций.

Частичная  сводка этих результатов дана в \cite[3.2]{Khsur} и ниже в подразделе \ref{bal_iR}.  При этом возникла необходимость  обобщить,  развить и распространить введенные Л.\,А.~Рубелом и П.~Мальявеном ранее только {\it для положительных последовательностей\/} $\sf Z\subset \RR^+$ различные виды плотности их распределения около бесконечности  {\it на комплексные последовательности\/} $\sf Z\subset \CC$ и на меры на $\CC$. В настоящей третьей части работы и намечаемой четвёртой части мы существенно развиваем эти результаты и охватываем ряд новых вопросов. Определенные параллели  в этих наших исследованиях прослеживаются со знаменитыми теоремами Бёрлинга\,--\,Мальявена о мультипликаторе и о радиусе полноты \cite[3.2]{Khsur}--\cite{MasNazHav05}.
   
Из наших результатов, развивающих теорему Рубела\,--\,Мальявена, будут выведены в четвертой части настоящего цикла работ  дополнительные  критерии (не)полноты экспоненциальных систем функций в (под)про\-с\-т\-р\-а\-н\-с\-т\-в\-ах голоморфных функций в неограниченных областях $D\subset \CC$  в различных топологиях, а также в пространствах функций на {\it вещественной оси\/} $\RR$. Наряду с результатами различных авторов в этом направлении, достаточно полно освещенных в  монографии-обзоре первого из авторов \cite[3.2]{Khsur}, из  свежих результатов 2017 г., не вошедших в наш обзор, отметим совместную работу А.\,С.~Кривошеева  и А.\,Ф.~Кужаева \cite[теорема 4]{KriKuz17}, где в форме критерия  даны условия полноты экспоненциальной системы с {\it положительными показателями, вкладываемыми  в измеримые последовательности,\/} т.\,е.  очень  <<правильно распределенными>>,  для пространств голоморфных функций в {\it выпуклой области\/} $D$, включающей или нет в себя  отрезок заданной длины, параллельный мнимой оси. 

\subsection{Основные обозначения, определения и соглашения}\label{s1.1} Первоначально этот подраздел~\ref{s1.1}, по-видимому, целесообразно пропустить и обращаться к нему лишь  по мере необходимости. Подавляющая часть обозначений и  определений согласована с первой \cite{KhI} и второй \cite{KhII} частями нашей работы и в значительной мере соответствует таковым  из  монографии-обзора \cite{Khsur}  первого из авторов. Здесь мы напоминаем те из них, которые непосредственно потребуются в настоящей третьей части работы, дополняя некоторыми новыми определениями и обозначениями.

\subsubsection{Множества, порядок, топология}\label{1_1_1} Как обычно, $\NN=\{1,2,\dots\}$ и $\ZZ$ --- множества {\it натуральных\/} и {\it целых чисел,\/} $\NN_0:=\{0\}\cup \NN$ --- <<французский>> натуральный ряд.  
 {\it Расширения вещественной прямой\/} $\RR$
\begin{equation}\label{R+-}
\RR_{-\infty}:=\{-\infty\}\cup \RR, \quad \RR_{+\infty}:=\RR\cup \{+\infty\},\quad  \RR_{\pm\infty}:=\RR_{-\infty}\cup\RR_{+\infty} 
\end{equation}
рассматриваем  с естественными отношением порядка $-\infty \leq x\leq +\infty$ для любого $x\in \RR_{\pm\infty}$.  Для $\varnothing  \subset \RR_{\pm\infty}$ по определению $\sup \varnothing:=-\infty$ и 
$\inf \varnothing:=+\infty$.  \textit{Расширенная числовая прямая\/} $\RR_{\pm\infty}$  снабжается естественной порядковой топологией: \textit{базу открытых множеств\/} образуют множества $(a,b):=\{x\in \RR_{\pm \infty} \colon 
a<x<b \}$, $(a,+\infty]:=\{x\in \RR_{\pm \infty} \colon 
a<x \}$,  $[-\infty,b):=\{x\in \RR_{\pm \infty} \colon 
x<b \}$ с произвольными $\RR \ni a<b\in \RR$.  \textit{Интервал} --- связное подмножество в $\RR_{\pm \infty}$. Для $x\in \RR_{\pm \infty}$ и  $X\subset \RR_{\pm \infty}$ полагаем 
\begin{equation}\label{+}
x^+:=\max\{0,x \}, \;X^+:=\{x^+\colon  x\in X\}, \;X_*:=X\setminus \{0 \},\; X^+_*:=(X_*)^+. 
\end{equation}

$\CC_{\infty}:=\CC\cup\{\infty\}$ --- {\it расширенная комплексная плоскость,\/} или одноточечная компактификация  Александрова комплексной плоскости $\CC$ с естественной нормой-модулем $|\cdot|$; $|\infty|:=+\infty$. $S_*:=S\setminus \{0\}$ при $S\subset \CC_{\infty}$. 
 
$D(z,r):=\{z' \in \CC \colon |z'-z|<r\}$ --- {\it открытый круг с центром\/ $z\in \CC$ радиуса\/ $r\in \RR_{+\infty}^+$}; 
 $D(r):=D(0,r)$; $\DD:=D(1)$ --- {\it
единичный круг.\/} Открытые круги с центром  $z=\infty$  в настоящей статье удобно определить как
\begin{equation}\label{Dinfty}
D(\infty,r):=\{z\in \CC_{\infty} \colon |z|>1/r\}, \quad  \frac{1}{+\infty}:=0, \quad
\frac{1}{0}:=+\infty.
\end{equation} 
Тогда $\bigl\{D(z,r)\colon  z\in \CC_\infty, r\in \RR_*^+\bigr\}$ --- б\textit{аза открытых множеств в\/} $\CC_\infty$. 
Кроме того, $\overline{D}(z,r):=\{z' \in \CC \colon |z'-z|\leq r\}$ --- {\it замкнутый круг;} $D_*(z,r):=\bigl(D(z,r)\bigr)_*$, $D_*(r):=D_*(0,r)$, $\overline{D}_*(z,r):=\bigl(\overline{D} (z,r)\bigr)_*$, $\overline{D}_*(r):=\overline{D}_ *(0,r)$. 
\textit{Область} --- открытое связное подмножество в $\CC_{\infty}$.
Для $S\subset \CC_{\infty}$ полагаем $S_{\infty}:=S\cup \{\infty\}$. 
Так, $\RR_{\infty}=\RR\cup \{\infty\}\subset \CC_{\infty}$ --- {\it замыкание $\RR$ в $\CC_{\infty}$}.

\begin{equation}\label{C+-}
\begin{split}
\CC^{\up}:= \{z\in \CC \colon \Im z>0\},\quad &\CC_{\lw}:=\{z\in \CC\colon \Im z<0\}, \\
\CC_{\rght}:= \{z\in \CC \colon \Re z>0\},\quad 
&{_{\lft\!}\CC}:=\{z\in \CC\colon \Re z<0\},\\
\CC^{\overline \up}:=\CC^{\up}\cup \RR, 
\quad &\CC_{\overline \lw}:=\CC_{\lw}\cup \RR,\\
\quad \CC_{\overline \rght}:=\CC_{\rght}\cup i\RR,
\quad &_{\overline \lft\!}\CC:={_{\lft\!}\CC}\cup i\RR
\end{split}
\end{equation} 
--- соотв.\footnote{Сокращение для <<соответственно>>} {\it открытые верхняя, нижняя, правая и левая полуплоскости,\/} а  
также их {\it замыкания  в топологии комплексной плоскости\/} $\CC$.

На  $S\subset \CC_{\infty}$ индуцируется топология с $\CC_{\infty}$, но при рассмотрении
$S\subset \RR_{\pm\infty}$ как подмножества в $\RR_{\pm\infty}$ уже с $\RR_{\pm\infty}$. Для $z\in \CC$ и $S\subset \CC$ полагаем $zS:=\{zz'\colon z'\in S \}$, что вполне согласуется с обозначениями $i\RR$ и $i\RR^+$  соотв.  для {\it мнимой оси\/} и  {\it мнимой положительная полуоси\/}. 
Для $z\in \CC$ и $S\subset \CC_{\infty}$,  $\bar z$, как обычно, сопряжённое к $z$ число, $\bar \infty:=\infty$, $\bar S:=\{\bar z\colon z\in S\}$. 
Одним и тем же символом $0$ обозначаем, по контексту, число нуль, нулевой вектор, нулевую функцию, нулевую меру (заряд) и т.\,п.; $\varnothing$ --- {\it пустое множество.\/} Для подмножества $X$ векторной решётки  функций, мер и т.\,п. с отношением порядка $\geq$ также используем обозначения \eqref{+}.
{\it Положительность\/} понимается  как $\geq 0$. Аналогично \textit{отрицательность} --- $\leq 0$.   

Для подмножества $S$ топологического пространства $T$ через $\clos S$, $\Int S$, $\partial S$ обозначаем соотв. {\it замыкание,\/} {\it внутренность\/} и {\it границу\/} множества $S$ в $T$. Если замыкание $\clos S$ --- компакт в $T$, то $S$ {\it предкомпактно в $T$} и пишем  $S\Subset T$. В частности, для $S\subset \CC_{\infty}$ через $\clos S$, $\Int S$ и $\partial S$ обозначаем соотв. {\it замыкание,\/}
{\it внутренность,\/} и {\it границу\/} подмножества $S$ {\it в\/} $\CC_{\infty}$.

\subsubsection{Функции}\label{1_1_2}  
Произвольной функции $f\colon X\to \RR_{\pm \infty}$ сопоставляем 
пару функций $f^+\colon x\mapsto (f(x))^+$, $f^-:=(-f)^+$.
Функция $f\colon X\to Y$ с упорядоченными $(X,\leq)$ и
$(Y,\leq)$ {\it возрастающая}, если для $x_1,x_2\in X$ из $x_1\leq x_2$ следует $f(x_1)\leq f(x_2)$. Аналогично для убывания. В основном  $Y\subset \RR_{\pm \infty}$. {\it На множествах функций\/} с упорядоченным
множеством значений {\it отношение порядка\/} индуцируется с множества значений как {\it поточечное.} 

Для  $X\ni x_0$ и  $f\colon X\setminus \{x_0\}\to \RR_{\pm\infty}$ записи <<$f(x)\leq O(1)$ {\it  при\/} $x\to x_0$>> или $f(x)\underset{x\to x_0}{\leq} O(1)$  означает, что функция $f$ в какой-то  проколотой окрестности точки $x_0$ ограничена сверху некоторым числом из $\RR$. 

Пусть $S\subset \CC_{\infty}$. Классы $\Hol(S)$ и  $\sbh (S)$ состоят из сужений на $S$ функций, соотв. {\it голоморфных\/} и {\it субгармонических\/}  в каком-либо открытом множестве,  включающем в себя $S$ \cite{Rans}, \cite{HK};  
$\Har (S):=\sbh (S)\cap \bigl(-\sbh(S)\bigr)$ --- класс {\it гармонических\/} функций на $S$; $\dsbh (S):=\sbh (S) -\sbh(S)$ --- класс \textit{$\delta$-субгармонических,\/} или разностей субгармонических,  \textit{функций\/} на $S$ \cite{Ar_d}, \cite[2]{Gr}, \cite[3.1]{KhaRoz18}.
Полагаем  $\Hol_*(S):=\Hol(S)\setminus \{0\}$.  Тождественную $-\infty$ или $+\infty$ на $S$ обозначаем соотв. $\boldsymbol{-\infty}\in \sbh (S)$
или $\boldsymbol{+\infty}\in -\sbh (S)\subset \dsbh (S)$; $\sbh_*(S):=\sbh (S)\setminus \{\boldsymbol{-\infty}\}$, $\dsbh_*(S):=\dsbh (S)\setminus \{\boldsymbol{\pm\infty}\}$. В то же время для топологии на $S$, индуцированной с $\CC_{\infty}$,
  $C(S)$ --- банахово пространство над $\CC$  непрерывных функций $f\colon S\to \CC$  с $\sup$-нормой  $\|f\|_S:=\sup_{z\in S}|f(z)|$. 

Для {\it области\/} $D$ пространство $\Hol(D)$ над полем $\CC$ снабжается топологией равномерной сходимости на компактах, если не оговорено противное.

Для \textit{открытого подмножества\/} $O\subset \CC_{\infty}$ 
через $C(\clos O)\cap \Hol(O)$ обозначаем нормированное пространство функций $f\in C(O)$ с сужением $f\bigm|_O\in \Hol(O)$ с $\sup$-нормой $\|\cdot \|_{\clos O}$.     

Для {\it замкнутого подмножества\/}  $S\subset \CC_{\infty}$
{\it пространство ростков\/} $\Hol (S)$ состоит из классов эквивалентности при факторизации относительно отношения эквивалентности
\textit{<<функции равны на некотором открытом множестве $O\supset S$>>}. 
Пусть $S$---  пересечение открытых множеств $O_m\subset \CC_\infty$.
\textit{Топология индуктивного предела} на $\Hol(S)$ 
определяется базой окрестностей нуля,  получаемых как  
 абсолютно выпуклые оболочки всевозможных объединений непустых открытых шаров ненулевого  радиуса с центрами в нуле из нормированных пространств  $C(\clos O_m)\cap \Hol(O_m)$  \cite[0.1.2]{Khsur}.

Для $z\in \CC$ и чи\-с\-ла $r \geq 0$ определим  интегральные средние по  окружности $\partial \overline D(z, r)$  от 
 фу\-н\-к\-ц\-ии $ v\colon \partial 
  D(z,r)\to \RR_{\pm\infty}$: 
\begin{subequations}\label{df:MCB}
\begin{align}
C_v(z, r)&:=:C(z, r;v)\overset{\eqref{LS}}{:=}\frac{1}{2\pi} \int_{0}^{2\pi}  v(z+re^{i\theta}) \dd \theta, \; C_v(r):=C_v(0, r),\tag{\ref{df:MCB}c}\label{df:MCBc}\\
\intertext{по кругу $D(z,r)$ от фукции $v\colon D(z,r)\to \RR_{\pm \infty}$:}
B_v(z,r)&:=:B(z,r;v)\overset{\eqref{LS}}{:=}\frac{2}{r^2}\int_{0}^{r} C_v(z, t)t\dd t ,
\quad B_v(r):=B_v(0, r), \tag{\ref{df:MCB}b}\label{df:MCBb}\\
\intertext{а также верхнюю грань функции $v\colon \partial D(z,r)\to \RR_{\pm \infty}$ на  $\partial D(z,r)$:}
M_v(z,r)&:=:M(z,r;v):=\sup_{z'\in \partial D(z,r)}v(z'), 
\quad M_v(r):=M_v(0, r),  
\tag{\ref{df:MCB}m}\label{df:MCBm}
\end{align}
\end{subequations}
что  при $v\in \sbh \bigl( \overline D(z, r)\bigr)$ совпадает с верхней гранью 
функции $v$ в круге $\overline D(z, r)$. Конечно же, в  \eqref{df:MCB} для  \eqref{df:MCBc} и \eqref{df:MCBb} подразумевается существование интегралов  \cite[определение 2.6.7]{Rans}, \cite{HK}, для которых, вообще говоря, допускаются значения $\pm\infty$. 
Функция $v\in \sbh(\CC)$ {\it конечного типа\/} (при порядке $1$), если \cite[2.1]{KhI}
\begin{equation}\label{typev}
\type_1[v]\overset{\text{\cite[2.1]{KhI}}}{:=}\type_1^{\infty}[v]\overset{\eqref{df:MCBm}}{:=}
\limsup_{r\to +\infty}\frac{M_{v}(r)}{r}<+\infty.
\end{equation}
Функция $v\in \dsbh_*(\CC) $ конечного типа (при порядке $1$), если она представима\footnote{Исходное определение естественнее давать как функцию $v$ с   характеристикой Неванлинны $T_v$ конечного типа (при порядке $1$), но для функций конечного порядка это классическое определение эквивалентно приводимому здесь.} в виде  разности двух функций из $\sbh_*(\CC)$ конечного типа.
Функция $f\in \Hol(\CC)$ ---
{\it целая функция\/  экспоненциального типа (ц.ф.э.т.),\/} если 
 функция $\log |f|\in \sbh(\CC)$ конечного типа (при порядке $1$).

Произвольной функции $u\colon \CC\overset{\eqref{R+-}}{\to} \RR_{\pm \infty}$ сопоставляем ее  {\it индикатор роста\/} при порядке $p\in \RR^+$, а именно:
\begin{equation}\label{h}
{\ind_p}[u]\colon \theta \mapsto\limsup_{\RR^+\ni r\to +\infty}\frac{u(re^{i\theta})}{r^p}, \quad \theta \in \RR,
\end{equation}
--- $2\pi$-периодическая функция. Если $v\in \sbh_*(\CC)$ --- субгармоническая фу\-н\-к\-ц\-ия конечного типа  и  $f\in \Hol_*(\CC)$ --- ц.ф.э.т., то ${\ind_1}[v]\colon \RR\to \RR_{-\infty}$, а также  ${\ind_1}[\log|f|]\colon \RR\to\RR_{-\infty}$ ---  \textit{тригонометрически выпуклые функции\/} \cite{Levin56}, \cite{Levin96}. 

Число $C$ и постоянную функцию, тождественно равную $C$, не различаем. Через $\const_{a_1, a_2, \dots}$ обозначаем вещественные постоянные, зависящие от $a_1, a_2, \dots$ и, если не
оговорено противное, только от них.

\subsubsection{Меры и заряды}\label{1_1_3} Далее $\Meas (S)$ --- класс\footnote{В \cite{KhI}, \cite{KhII}  вместо $\Meas$ использоваловался символ  $\mathcal M$.} всех {\it счетно-аддитивных функций борелевских
подмножеств борелевского множества $S\subset \CC_{\infty}$ со значениями в $\RR_{\pm\infty}$, конечных на компактах из $S$.\/}
Элементы из $\Meas (S)$ называем {\it зарядами,\/} или {\it вещественными мерами,\/} на $S\subset \CC_{\infty}$; $\Meas^+ (S):=\bigl(\Meas (S)\bigr)^+$ --- подкласс {\it положительных мер,\/} или просто \textit{мер.\/} Заряд $\mu \in \Meas (S)$ {\it сосредоточен\/} на
$\mu$-измеримом подмножестве $S_0\subset S$, если $\mu (S')=\mu (S'\cap S_0)$ для любого $\mu$-измеримого подмножества $S'\subset S$.
Очевидно, заряд $\mu$ сосредоточен на \textit{носителе\/} $\supp \mu\subset S$. Для измеримого по $\mu\in \Meas(S)$ подмножества $S_0\subset
S$ через $\mu\bigm|_{S_0}$ обозначаем {\it сужение заряда $\mu$ на $S_0$.\/} Через $\lambda_{\CC}$, $\lambda_{\RR}$ и $\lambda_{i\RR}$ обозначаем {\it лебеговы меры\/} соотв. на $\CC$, $\RR$ и $i\RR$,  а также их сужения на подмножества из $\CC$. Нижний индекс $\CC$ в $\lambda_{\CC}$ при
этом часто опускаем. Через $\delta_z$ обозначаем {\it меру Дирака в точке $z\in \CC_{\infty}$,\/} т.\,е. вероятностную меру с $\supp \delta_z=\{z\}$. Как обычно, для $\mu\in \Meas(\CC)$ через
$\mu^+:=\max\{0,\mu\}$, $\mu^-:=(-\mu)^+$ и $|\mu|:=\mu^++\mu^-$, обозначаем {\it верхнюю, нижнюю\/ {\rm и} полную вариации заряда\/
$\mu=\mu^+ -\mu^-$.} Для заряда $\nu\in \Meas (S )$ и круга $D (z,r)\subset S$ полагаем
\begin{equation}\label{df:nup} 
\nu (z,r):=\nu \bigl(\,\overline D(z,r)\bigr),\quad \nu^{\rad}(r):=\nu(0,r) 
\end{equation} --- соотв.  {\it считающая функция с
центром\/ $z$} и {\it считающая радиальная фу\-н\-к\-ц\-ия} заряда $\nu$. 
Заряд $\nu\in \Meas (\CC)$ \textit{конечной верхней плотности\/} (при порядке $1$), если \cite[2.1]{KhI}
\begin{equation}\label{fden}
{\type_1}[\nu]\overset{\text{\cite[2.1]{KhI}}}{:=}\type_1^\infty [\nu]:=\limsup_{r\to +\infty} \frac{|\nu|^{\rad}(r)}{r} <+\infty.
\end{equation}

Для $\nu\in \Meas(\CC)$ используем
и {\it функцию распределения\/} $\nu^{\RR}$ сужения $\nu\bigm|_{\RR}$ заряда $\nu$ {\it на\/} $\RR$ \cite[(1.9)]{KhI}:
\begin{equation}\label{nuR} 
\nu^{\RR}(x):=\begin{cases} -\nu\bigl( [x,0) \bigr) \; &\text{ при } x<0 ,\\ \nu\bigl([0,x] \bigr) \; &\text{ при } x\geq 0,
 \end{cases} 
\end{equation}
а также {\it функцию распределения\/} $\nu^{i\RR}$ сужения $\nu\bigm|_{iR}$ заряда $\nu$ {\it на\/} $\RR$:
\begin{equation}\label{nuiR} 
\nu^{i\RR}(y):=\begin{cases} -\nu\bigl( [iy,0) \bigr) \; &\text{ при } y<0 ,\\ \nu\bigl([0,iy] \bigr) \; &\text{ при } y\geq 0,
 \end{cases} 
\end{equation}
которые являются функциями локально ограниченной вариации. Интегралы Лебега\,--\,Стилтьеса по интервалу по таким функциям $n\colon \RR\to \RR$ обычно, если не оговорено противное, понимаем как 
 \begin{equation}\label{LS}
\int_r^R\dots \dd n :=\int_{(r,R]}\dots \dd n 
\end{equation}
Для открытого $\mc O\subset \CC_{\infty}$ {\it меру Рисса\/} функции $v\in \sbh_* ({\mathcal O})$ обозначаем как $\nu_v:=\frac{1}{2\pi}\Delta v\in \Meas ^+(\mathcal O)$ или
$\mu_v$ и т.\,п., где оператор Лапласа $\Delta$ действует в смысле теории обобщённых функций. 
Для функции  $\boldsymbol{-\infty}\in \sbh (\mathcal O)$ её мера Рисса по
определению равна $+\infty$ на любом подмножестве из ${\mathcal O}$.
Функции $v\in \dsbh (\mathcal O)$ сопоставляется 
{\it заряд Рисса\/} $\nu_v:=\frac{1}{2\pi} \Delta v\in \Meas(\mathcal O)$.   
Для функции  $\boldsymbol{+\infty}\in -\sbh (\mathcal O)\subset \dsbh(\mathcal O)$ её заряд Рисса по определению равен $-\infty$ на любом подмножестве из ${\mathcal O}$.
\subsubsection{Последовательности точек в области} Пусть ${\sf Z}=\{ {\sf z}_k\}$ --- не более чем счётное  множество точек области $D\subset {\CC}$, проиндексированное некоторым набором индексов $k\in \ZZ$. В ${\sf Z}$ допускаются совпадения нескольких  точек с различными индексами. Однако всегда предполагаем, что  ${\sf Z}$  не имеет предельных  точек в $D$, если не оговорено противное. Возможно, что ${\sf Z}$ конечно или ${\sf Z} =\varnothing$. Такое {\it проиндексированное множество\/} ${\sf Z}$
естественнее воспринимать как целочисленную положительную функцию, т.\,е.
как {\it положительный дивизор\/} ${\sf Z} \colon D \to \ZZ^+$, равный  в каждой точке $z \in D$ числу вхождений точки $z$ в ${\sf Z}$, или как целочисленную {\it считающую меру}
\begin{equation}\label{df:divmn}
n_{\sf Z}:=\sum_k \delta_{{\sf z}_k}, \quad  
n_{\sf Z} (S)= \sum_{z \in S} {\sf Z} (z)=\sum_{{\sf z}_k\in S} 1 
 \quad \text{для любых $S\subset D$}.
\end{equation}
В частности, $n_{\sf Z} \bigl(\{ z\}\bigr)={\sf Z} (z)$ при всех $z \in D$.
Такой подход отличается от стандартного  взгляда на последовательность точек как на функцию натурального или целого аргумента. При последнем стандартном подходе, когда нумерация последовательности имеет значение, изображаем ее в круглых скобках, т.\,е. в виде ${\sf Z}=({\sf z}_k)$.  
Тем не менее как {\it проиндексированное множество\/} ${\sf Z}=\{{\sf z}_k\}$, так и {\it занумерованную последовательность\/} ${\sf Z}=({\sf z}_k)$ часто называем просто {\it последовательностью точек.\/} 

{\it Две последовательности\/} ${\sf Z}=\{{\sf z}_k\}$ и ${\sf W}
=\{ {\sf w}_k\}$ из $D$ {\it  равны\/} и соотв. пишем ${\sf Z} = {\sf W}$, если для  их дивизоров имеет место тождество ${\sf Z} (z )\equiv {\sf W} (z)$ при всех $z \in D$, т.\,е. $n_{\sf Z}\overset{\eqref{df:divmn}}{=} n_{\sf W}$ как меры. 
Иначе говоря, каждая {\it последовательность точек\/} рассматривается как представитель некоторого класса эквивалентности, состоящего  из последовательностей в $D$ с одинаковыми дивизорами.
При этом носитель $\supp {\sf Z}:=\supp n_{\sf Z}$. Запись $z \in {\sf Z}$ (соотв. $z \notin {\sf Z}$) означает, что $z \in \supp {\sf Z}$ (соотв. $z \notin \supp {\sf Z}$). Для подмножества $S \subset {\CC}$ запись ${\sf Z} \subset S$ означает, что 
$\supp {\sf Z} \subset S$; ${\sf Z} \cap S$ --- сужение последовательности  $\sf Z$ на $S$ с дивизором ${\sf Z} \bigm|_S$ и считающей мерой $n_{\sf Z}\bigm|_S$.

Последовательность точек ${\sf W} \subset D$ включена (содержится) в ${\sf Z}$, если $ {\sf W} (z)\leq {\sf Z}(z)$ в терминах дивизоров  при всех $z \in D$, т.\,е. $n_{\sf W}\overset{\eqref{df:divmn}}{\leq} n_{\sf Z}$. При этом пишем ${\sf W}\subset {\sf Z}$ и говорим, что ${\sf W}$
--- {\it подпоследовательность\/} из ${\sf Z}$.

{\it Объединение\/}  ${\sf Z} \cup {\sf W}$
через дивизоры задается тождеством 
$({\sf Z} \cup {\sf W})(z)\equiv {\sf Z} (z) + {\sf W} (z)$, т.\,е.
$n_{{\sf Z} \cup {\sf W}}\overset{\eqref{df:divmn}}{=}n_{\sf Z}+n_{\sf W}$, а \textit{пересечение\/} ${\sf Z} \cap {\sf W}$ через дивизоры --- тождеством  
$({\sf Z} \cup {\sf W})(z)\equiv \min \bigl\{{\sf Z} (z) , {\sf W} (z)\bigr\}$, т.\,е.
$n_{{\sf Z} \cap {\sf W}}\overset{\eqref{df:divmn}}{=}\inf \{n_{\sf Z},n_{\sf W}\}$. При ${\sf W}\subset {\sf Z}$ {\it разность\/} последовательностей  ${\sf Z} \setminus {\sf W}$ определяет дивизор $({\sf Z} \setminus {\sf W})(z) \equiv {\sf Z}(z) -  {\sf W} (z)$, $z \in D$, или считающая мера $n_{{\sf Z} \setminus  {\sf W}}\overset{\eqref{df:divmn}}{=}
n_{\sf Z}-n_{\sf W}$. На последовательностях  точек операции и отношения, отличные от приведенных выше,  понимаются поэлементно. Так,
$z{\sf Z} :=\{ z{\sf z}_k\}$, $\Re {\sf Z} := \{ \Re {\sf z}_k\}$;
${\sf Z} \geq 0$, если ${\sf z}_k \geq 0$ для всех $k$, и т.\,п.

В соответствии с \eqref{df:nup}--\eqref{nuiR} определяются  \begin{equation}\label{nZ}
\begin{split}
n_{\sf Z}(z, r), \quad n_{\sf Z}^{\rad}(r)\overset{\eqref{df:nup}}{:=}&(n_{\sf Z})^{\rad}(r), \quad r\in \RR^+,
\\
n_{\sf Z}^{\RR}(x)\overset{\eqref{nuR}}{:=}(n_{\sf Z})^{\RR}(x), 
\quad n_{\sf Z}^{i\RR}(y)\overset{\eqref{nuiR}}{:=}&(n_{\sf Z})^{i\RR}(y),
\quad x,y\in \RR;
\end{split}
\end{equation}
${\sf Z}\subset \CC$ {\it конечной верхней плотности\/} (при порядке 1), если 
\begin{equation}\label{fdenZ}
{\type_1}[{\sf Z}]\overset{\eqref{fden}}{:=}{\type_1}[n_{\sf Z}]<+\infty.
\end{equation}

Последовательности ${\sf Z}=\{{\sf z}_k\}\subset \CC$ сопоставляем 
{\it экспоненциальную систему $\Exp^{\sf Z}\subset \Hol_* (\CC)$
с последовательностью показателей\/} $\sf Z$: 
\begin{equation}\label{ExpZ} 
\begin{split}
\Exp^{\sf Z}:=\bigl\{ z\mapsto z^pe^{{\sf z}_kz}
\colon& z\in \CC,\;  p\in \NN_0, \bigr.\\
&\bigl.0\leq p\leq n_{\sf Z}\bigl(\{{\sf z}_k\}\bigr)-1\overset{\eqref{df:divmn}}{=}{\sf Z}({\sf z}_k)-1\bigr\}.
\end{split}
\end{equation} 

\subsubsection{Последовательности нулей голоморфных функций}
Для функции $f\in \Hol_*(D)$ в области $D\subset  {\CC}$ через $\Zero_f$ обозначаем последовательность точек в $D$,   дивизор которой в каждой точке $z\in D$ равен кратности нуля функции $f$ в этой точке. Последовательность  $\Zero_f$ называем {\it последовательностью нулей,\/}
или корней, функции $f\in \Hol_*(D)$,  перенумерованной  с учетом кратности. Для нулевой функции   в $D$ по определению ее дивизор $\Zero_0=\boldsymbol{+\infty}$ на $D$. Для любой последовательности точек
${\sf Z}\subset D$ по определению  ${\sf Z} \subset \Zero_0$. 
Функция $f \in \Hol (D)$ {\it обращается в нуль на\/}
${\sf Z}$, если  ${\sf Z} \subset \Zero_f$. При этом пишем $f({\sf Z})=0$.

Пусть $H \subset \Hol(D)$. Последовательность ${\sf Z}\subset D$ --- {\it последовательность нулей для\/} $H$, если существует $f\in H$ с  $\Zero_f= {\sf Z}$; ${\sf Z}$ --- {\it подпоследовательность нулей для\/} $H$, если существует функция $f\neq  0$  из $H$, для которой $ {\sf Z} \subset \Zero_f$.  
Если $H$ --- векторное подпространство, то подпоследовательность нулей для $H$
называем еще и {\it множеством,\/} или {\it последовательностью, неединственности для\/} $H$; ${\sf Z}$ --- {\it множество,\/} или {\it последовательность, единственности для\/} $H$, если из $f\in H$ и  $f({\sf Z})=0$ следует $f= 0$.

\section{Результаты П.~Мальявена и Л.\,А.~Рубела\\ и их развитие и обобщения}\label{ss_RM}
\setcounter{equation}{0} 

Произвольная последовательность точек на $\CC$ без предельных точек в $\CC$ называем \textit{комплексной последовательностью\/} на $\CC$.
Комплексная последовательность на $\CC$ \textit{положительная,} если ее носитель --- подмножество $\RR^+$. 

\subsection{Случай положительных последовательностей}\label{2_1} Следуя \cite[2]{MR}, \cite[22]{RC}, последовательности  ${\sf Z}=\{{\sf z}_k\}\subset \CC$ 
сопоставляем идеал 
\begin{equation}\label{FZf}
I({\sf Z}):=\bigl\{f\in \Hol (\CC)\colon
f({\sf Z})=0\bigr\}\subset \Hol (\CC)
\end{equation} 
в кольце $\Hol (\CC)$, а также идеал в кольце всех ц.ф.э.т.\footnote{В \cite[2]{MR} и  \cite[22]{RC} идеал $I^1({\sf Z})$ обозначен соотв. как $\mathcal F ({\sf Z})$ и $F({\sf Z})$.}
\begin{equation}\label{FZf1}
I^1({\sf Z})\overset{\eqref{FZf}}{:=}I({\sf Z})\cap 
\bigl\{f\in \Hol (\CC)\colon {\type_1}\bigl[\log |f|\bigr]\overset{\eqref{typev}}{<}+\infty \bigr\}.
\end{equation} 
Полагаем 
\begin{equation}\label{II1}
\begin{split}
I_*({\sf Z})&\overset{\eqref{FZf}}{:=}I({\sf Z})\setminus \{0\}=I({\sf Z})\cap \Hol_*(\CC),
\\ 
I_*^1({\sf Z})&\overset{\eqref{FZf1}}{:=}I^1({\sf Z})\setminus \{0\}=I^1({\sf Z})\cap \Hol_*(\CC).
\end{split}
\end{equation}
\begin{propos}[{\rm \cite{Levin56},\cite{Levin96}}]\label{pr2_1}
 $I_*({\sf Z})\neq \varnothing$ тогда и только тогда, когда последовательность $\sf Z$ не имеет предельных точек в\/ $\CC$ --- {\rm частный случай классической теоремы Вейерштрасса для $\CC$.} 

$I^1_*({\sf Z})\neq \varnothing$ тогда и только тогда, когда последовательность $\sf Z$ конечной верхней плотности при порядке $1$, 
т.\,е. выполнено \eqref{fdenZ}, ---{\rm  частный случай классической теоремы Адамара\,--\,Вейерштрасса.} 
\end{propos}
Для положительной последовательности ${\sf Z}:=\{{\sf z}_k\}\subset \RR^+_*$ в \cite[1]{MR}, \cite[22]{RC} определен ее \textit{характеристический логарифм\/} $l_{\sf Z}\colon
(\RR^+_*)_{+\infty}\to \RR^+_{+\infty}$ как\footnote{В \cite{MR}, \cite{RC} пределы суммирования даны как $0<{\sf z}_k< r $, но при рассмотрении последовательностей ${\sf Z}$ конечной верхней плотности это несущественно.} 
\begin{equation}\label{logZ}
l_{\sf Z}(r):=\sum_{0<{\sf z}_k\leq r } \frac{1}{{\sf z}_k}\overset{\eqref{nuR}, \eqref{LS}}{=}\int_0^r \frac1{x}
\dd n_{\sf Z}^{\RR}(x), \quad 0<r
\leq +\infty. 
\end{equation}
Там же  рассмотрен  \textit{разностный характеристический логарифм\/}
\begin{equation}\label{logZ-}
l_{\sf Z}(r,R) \overset{\eqref{logZ}}{:=} l_{\sf Z}(R)- l_{\sf Z}(r),\quad  0<r<R\leq +\infty,
\end{equation}
--- {\it логарифмическая  функция интервалов\/} $(r,R]\subset  (\RR_{+\infty}^+)_*$.

\begin{theoMR}[{\rm Мальявена\,--\,Рубела \cite[теорема 4.1]{MR}, \cite[22, основная теорема]{RC}}]\label{MR1} Пусть ${\sf Z}\subset \RR_*^+$ и
${\sf W}=\{{\sf w}_k\}\subset \RR_*^+$ --- две \underline{положительные} последовательности конечной верхней плотности\/ \eqref{fdenZ}. Тогда в обозначениях \eqref{FZf1}--\eqref{II1} следующие три утверждения эквивалентны: 
\begin{enumerate}[{\rm (i)}]
\item\label{fgi} для любой $g\overset{\eqref{II1}}{\in} I_*^1({\sf W})$ найдется $f\overset{\eqref{II1}}{\in} I^1_*({\sf Z})$ с ограничением 
\begin{equation}\label{fgiR}
\bigl|f(iy)\bigr|\leq \bigl|g(iy)\bigr|\quad \text{при  всех $y\in \RR$};
\end{equation}
\item\label{fgii} существует постоянная $C\in \RR$, для которой 
\begin{equation}\label{Zld}
l_{\sf Z}(r,R)\overset{\eqref{logZ-}}{\leq} l_{\sf W}(r,R) 
+C\quad \text{при всех\/ $0<r<R<+\infty$};
\end{equation}
\item\label{fgiii} Существует пара функций $f\in I_*^1({\sf Z})$ и  $g\in I^1_*({\sf W})$ c $\Zero_g \cap\, \CC_{\rght} ={\sf W}$, т.\,е. с сужением  $n_{\Zero_g}\bigm|_{\CC_{\rght}}=n_{\sf W}$,   с ограничением  \eqref{fgiR}. 
\end{enumerate}
\end{theoMR}

Пусть $S\subset \CC$. \textit{Опорной функцией множества\/} $S$ называют
три расширенные числовые функции, однозначно определяющие друг друга:
\begin{enumerate}[{[sf1]}]
\item\label{sf1}   \textit{ выпуклую положительно однородную\/} функцию на  $\CC$:
\begin{equation}\label{Spf}
\Spf_S(z):=\sup_{s\in S} \Re s\bar z, \quad z\in \CC;
\end{equation}

\item\label{sf2}  сужение предыдущей функции на единичную окружность
 $\partial \DD$:
\begin{equation}\label{spfS}
\Spf_S\bigm|_{\partial \DD}(e^{i\theta}),
 \quad e^{i\theta} \in \partial \DD, \theta \in \RR;
\end{equation}
\item\label{sf3} \textit{$2\pi$-периодическую тригонометрически выпуклую\/} функцию на $\RR$:
\begin{equation}\label{spfR}
\spf_S(\theta ):=\sup_{s\in S}se^{-i\theta}, \quad \theta\in \RR. 
\end{equation}
\end{enumerate}
В теории функций одной комплексной переменной традиционно большей частью используется версия [sf\ref{sf3}] опорной функции. Функция $\spf_S$ принимает значение $-\infty$, если и только если  $S=\varnothing$. Функция $\spf_S$ принимает значение $+\infty$, если и только если множество $S$ неограниченно в $\CC$. 

{\it Шириной  множества\/ $S$ в направлении\/} $\theta$ называется величина 
\begin{equation}\label{widht}
\width_S(\theta):=\spf_S(\theta+\pi/2)+\spf_S(\theta-\pi/2) 
\end{equation} 
--- наименьшая ширина замкнутых в $\CC$ полос, включающих в себя $S$,  со сторонами, параллельными {\it замкнутому лучу}
\begin{equation}\label{ray}
\text{ray}(\theta):=\{te^{i\theta}\colon t\in \RR^+ \}, 
\end{equation}  
или {\it направлению\/} $\theta$. Для  $b\in \RR_{+\infty}^+$, $\theta \in \RR$  
\begin{equation}\label{strip}
\strip_b:=\bigl\{ z\in \CC \colon |\Im z|<b\bigr\},\quad
\overline{\strip}_b:=\bigl\{ z\in \CC \colon |\Im z|\leq b\bigr\}
\end{equation}
--- соотв. {\it открытая\/} и \textit{замкнутая в\/ $\CC$ полосы ширины\/} 
\begin{equation}\label{wstrip}
2b=\width_{{\strip}_b}(0)=\width_{\overline{\strip}_b}(0)
\end{equation} 
в направлении $0$ и со сторонами, параллельными $\RR$, 
 со средней линией $\RR$. 
Кроме того, в подразделе \ref{2_2} будет использован прямоугольник 
\begin{equation}\label{Pi}
\Pi(a,b)\overset{\eqref{strip}}{:=}
\overline{\strip}_b\cap i\,\overline{\strip}_a,
\quad \begin{cases}
\width_{\Pi (a,b)}(0)&=2b, \\
\width_{\Pi (a,b)}(\pi/2)&=2a. 
\end{cases}
\end{equation}  
При $g(z):=\sin \pi bz$ и ${\sf W}=\frac{1}{b}\NN$ из теоремы MR\ref{MR1} 
 вытекает
\begin{corolMR}[{\rm \cite[теорема 6.1, следствие 9.1]{MR}, \cite[теорема 22.1]{RC}}]\label{corolMR1} Пусть  $b\in \RR^+$  и ${\sf Z}\subset \RR_*^+$. Следующие три утверждения эквивалентны:

\begin{enumerate}[{\rm (i)}]
\item\label{bi} существует  $f\in I_*^1({\sf Z})$, удовлетворяющая ограничению
\begin{equation}\label{fbh}
\bigl|f(iy)\bigr|\leq \exp \pi b |y|\quad\text{при всех $y\in \RR$};
\end{equation}
\item\label{bii} существует постоянная $C\in \RR$, для которой
\begin{equation}\label{Zlds}
l_{\sf Z}(r,R)\overset{\eqref{logZ-}}{\leq}
  b\log \frac{R}{r}+C\quad  \text{при всех\/ $0<r<R<+\infty$};
\end{equation}
\item\label{biii} экспоненциальная система\footnote{В 
 \cite[следствие 9.1]{MR} --- система $\Exp^{-{\sf Z}}$, что  в данном случае можно не различать.} $\Exp^{\sf Z}$ с последовательностью показателей $\sf Z$ в обозначении\/ \eqref{ExpZ} для любого или некоторого $s\in \CC$ не полна в пространстве функций, непрерывных в  замкнутой полосе 
$s+\overline{\strip}_{\pi b}$ и голоморфных в открытой полосе $s+{\strip}_{\pi b}$, в  топологии равномерной сходимости на компактах из $s+\overline{\strip}_{\pi b}\subset \CC$.
\end{enumerate}
\end{corolMR}
При более слабых требованиях на ограничение роста функции $f\in I_*^1({\sf Z})$, чем  в теореме MR\ref{MR1} и в следствии MR\ref{corolMR1}, с начала 1960-х гг. известно также 

\begin{corolMR}[{\rm \cite[теоремы 6.3--5]{MR}}]\label{corolMR2} Пусть  ${\sf Z}=\{{\sf z}_k\}\subset \RR_*^+$ ---  последовательность конечной верхней плотности и задано число  $b\in \RR^+$. Следующие два утверждения эквивалентны: 
\begin{enumerate}[{\rm (i)}]
\item\label{bi0} существует  функция $f\in I_*^1({\sf Z})$, для которой 
${\ind_1}[\log |f|](\pm\pi/2)\leq \pi b$;
\item\label{bii0}существуют  $q\colon {\RR}^+\to \RR^+$ с $\lim\limits_{R\to +\infty} q(R)=0$ и  $C\in \RR$, для которых 
\begin{equation}\label{lZR+0}
l_{\sf Z} (r, R) \overset{\eqref{logZ-}}{\leq} \bigl(b+q(R)\bigr) \log \frac{R}{r}+C  \quad\text{при всех\/ $0<r<R<+\infty$}.
\end{equation}
\end{enumerate}
\end{corolMR}

Следуя \cite{MR}, рассмотрим следующие плотности:
\begin{subequations}\label{dens}
\begin{align}
\overline{\dens}_{\log}({\sf Z}) &:=\limsup_{1< a\to +\infty}\frac1{\log a}
\limsup_{r\to +\infty} l_{{\sf Z}}(r, ar) ,
\tag{\ref{dens}l}\label{densl}  \\
 {\dens}_{\inf\log}({\sf Z}) &:=\inf_{a> 1}\frac1{\log a}
\limsup_{r\to +\infty} l_{{\sf Z}}(r, ar) , 
\tag{\ref{dens}i}\label{densi}  \\
{\dens}_{\rm b\log}({\sf Z}) &:=\inf b , \tag{\ref{dens}b} \label{densb}
\end{align}
\end{subequations}
где последняя точная  нижняя граница $\inf$ берется по всем таким $b\in \RR_*^+$, что найдется постоянная $C_b\in \RR$, для которой 
\begin{equation}\label{estb}
l_{{\sf Z}}(r, R)\le b\log \frac{R}{r}+C_b
\quad\text{при всех $0\le r<R<+\infty$.}
\end{equation}
В \cite[3]{MR} показано, что для  последовательности ${\sf Z}$ {\it конечной верхней плотности\/} они совпадают и внешний верхний предел $\limsup$ в правой части \eqref{densl} можно заменить на обычный предел $\lim$. 
Все они в этом случае называются \textit{логарифмической блок-плотностью\/} последовательности ${\sf Z}$. 

\begin{corolMR}[{\cite[теорема 6.2, следствие 9.1]{MR}}] \label{corolMR3} 
 Пусть  ${\sf Z}=\{{\sf z}_k\}\subset \RR_*^+$ и $b\in \RR_*^+$. Следующие три утверждения эквивалентны:
\begin{enumerate}[{\rm (i)}]
\item\label{bie} 
существует функция  $f\in I_*^1({\sf Z})$, для которой ${\ind_1}[f](\pm \pi/2)<\pi b$;
\item\label{biie} ${\dens}_{\log}({\sf Z}) <b$;
\item\label{biiie}  система $\Exp^{\sf Z}$ не полна в пространстве  $\Hol(S_{\pi b})$.
\end{enumerate}
\end{corolMR}
В \cite{MR} установлены  и иные весьма содержательные результаты, например, об аналитическом продолжении рядов \cite[теоремы 9.2]{MR} и др.

\subsection{Случай комплексных последовательностей}\label{2_2}
Первые результаты, позволяющей перейти в теореме Мальявена\,--\,Рубела к {\it произвольным комплексным последовательностям\/}  $\sf Z\subset \CC$ и 
${\sf W}\subset \CC$ были получены в конце 1980-х гг. в работах  \cite{Kha88}--\cite{Kha89} первого из соавторов.    
Пусть ${\sf Z}=\{{\sf z}_k\}\subset \CC$
--- комплексная последовательность. Развивая \eqref{logZ}, определим \textit{ правый и левый характеристические логарифмы} последовательности ${\sf Z}$ как
\begin{subequations}\label{logZC}
\begin{align}
l_{{\sf Z}}^{\rght}(r)&\overset{\eqref{logZ}}{:=}\sum_{\substack{0 < |{\sf z}_k|\leq r\\{\sf z}_k \in \CC_{\rght}}} \Re \frac{1}{{\sf z}_k}, \quad 0<r\leq +\infty, 
\tag{\ref{logZC}$\mathrm l^+$}\label{df:dD+}\\
l_{{\sf Z}}^{\lft}(r)&\overset{\eqref{logZ}}{:=}\sum_{\substack{0< |{\sf z}_n|\leq r\\{\sf z}_k \in {_{\lft}\!}\CC}} \Re \frac{-1}{{\sf z}_k}, \quad 0<r\leq +\infty,
\tag{\ref{logZC}$\mathrm l^-$}\label{df:dD-}
\end{align}
\end{subequations}
а также  {\it правую\/} и {\it левую  логарифмические функции интервалов}   \cite{KhaD88}--\cite[\S~1]{kh91AA} 
\begin{subequations}\label{df:l}
\begin{align}
l_{{\sf Z}}^{\rght}(r, R)&\overset{\eqref{df:dD+}}{:=}
l_{{\sf Z}}^{\rght}(R)-l_{{\sf Z}}^{\rght}(r),
\quad 0< r < R \leq +\infty,
\tag{\ref{df:l}$\mathrm l^+$}\label{df:dDl+}\\
l_{{\sf Z}}^{\lft}(r, R)&\overset{\eqref{df:dD-}}{:=}l_{{\sf Z}}^{\lft}(R)-l_{{\sf Z}}^{\lft}(r), \quad 0< r < R \leq +\infty,
\tag{\ref{df:l}$\mathrm l^-$}\label{df:dDl-}
\\
\intertext{которые порождают {\it логарифмическую функцию  интервалов\/} для  $\sf Z$:} 
l_{{\sf Z}}(r, R)&\overset{\eqref{logZ-}}{:=}\max \{ l_{{\sf Z}}^{\lft}(r, R), l_{{\sf Z}}^{\rght}(r,
R)\}, \quad 0< r < R \leq +\infty .
\tag{\ref{df:l}l}\label{df:dDlL}
\end{align}
\end{subequations}
Для ${\sf Z}=\varnothing$ по определению $l_{\sf Z}(r,R)\equiv 0$ при всех 
$0< r < R \leq +\infty$.

В начале 1990-х гг. в работе первого из соавторов \cite{kh91AA} удалось достичь уровня ограничения \eqref{fgiR} для последовательностей \textit{ комплексных\/} точек при некотором дополнительном условии. Говорим, что последовательность 
${\sf Z} =\{ {\sf z}_k\} \subset \CC$ {\it отделена от мнимой оси,\/}  
 если 
\begin{equation}\label{con:dis}
\left(\liminf_{k\to\infty}\frac{\bigl| \Re {\sf z}_k \bigr|}{ |{\sf z}_k |} >0 \right)
 \Longleftrightarrow
\left(\limsup_{k\to\infty}\frac{\bigl| \Im {\sf z}_k \bigr|}{ |{\sf z}_k |} <1 \right).
\end{equation}
Пара эквивалентных ограничений \eqref{con:dis} геометрически означает, что  найдется  пара непустых открытых вертикальных углов, содержащих $i\RR_*$, для которой точки ${\sf z}_k$ лежат вне этой пары углов  при всех больших $|k|$. 

\begin{theoKh}[{\rm \cite[основная теорема]{kh91AA}--\cite[3.2.2]{Khsur}}]\label{theoKh1} 
Пусть комплексные последовательности ${\sf Z}\subset \CC$ и ${\sf W}\subset \CC_{\rght}$ отделены от $i\RR$. Тогда утверждения 
\eqref{fgi}--\eqref{fgiii} теоремы\/ {\rm MR\ref{MR1}} по-прежнему попарно эквивалентны. 
\end{theoKh}

\begin{corolKh}[{\rm \cite[основная теорема]{kh91AA}--\cite[3.2.2]{Khsur}}]\label{corolKh1}  Пусть  $b\in \RR^+$  и комплексная последовательность ${\sf Z}\subset \CC$ отделена от $i\RR$. Тогда утверждения \eqref{bi}--\eqref{biii} следствия {\rm MR\ref{corolMR1}} попарно эквивалентны.  
\end{corolKh}

\begin{corolKh}[{\rm \cite[следствие 4.1]{kh91AA}--\cite[теорема 3.2.5, следствие 3.2.1]{Khsur}}]\label{corolKh2}
 Пусть   $b\in \RR^+$ и \textit{комплексная\/} последовательность ${\sf Z}=\{{\sf z}_k\}\subset \CC$  отделена от $i\RR$. 
Тогда каждое из утверждений \eqref{bi0} и \eqref{bii0} следствия \/ {\rm MR\ref{corolMR2}}  эквивалентно одному и тому же утверждению 
\begin{enumerate}
\item[{\rm (iii)}]\label{Pi_iii} существует число $D\in \RR^+$, для которого  система $\Exp^{\sf Z}$ не полна в пространстве ростков $\Hol (s+\Pi(D,\pi b))$ на любом или некотором сдвиге $s+\Pi(D,\pi b)$  прямоугольника $\Pi (D,\pi b)$ из \eqref{Pi}, $s\in \CC$.
\end{enumerate}
\end{corolKh}
Различные виды логарифмических плотностей для \textit{комплексной} последовательности ${\sf Z}$ определяем  в точности как в \eqref{dens}--\eqref{estb}  на основе
логарифмической функции интервалов \eqref{df:dDlL}. Для последовательности конечной плотности эти плотности совпадают 
\cite[\S~1]{Kha89}--\cite[теорема 1]{KKh00} и по-прежнему называем их  \textit{логарифмической  блок-плотностью\/} уже \textit{ ком\-п\-л\-е\-к\-с\-н\-ой} последовательности ${\sf Z}$.
\begin{corolKh}\label{corolKh3} Пусть $b\in \RR_*^+$ и ${\sf Z}\subset \CC$ --- комплексная последовательность. Тогда утверждения \eqref{bie}--\eqref{biiie} из следствия\/ {\rm MR3} по-прежнему попарно эквивалентны, если в п.~\eqref{biie} добавить требование конечности верхней плотности последовательности  ${\sf Z}$. 
\end{corolKh}

Следующий исторически начальный результат конца 1980-х гг. при переходе от положительных к комплексным последовательностям точек не содержит, как и предшествующее следствие Kh\ref{corolKh3}, никаких дополнительных ограничений на последовательности, кроме естественного, в свете 
частного  случая классической теоремы Адамара\,--\,Вейерштрасса
из предложения \ref{pr2_1}, условия конечности верхней плотности последовательностей. Он может быть получен с помощью \cite[основная теорема]{Kha01l} из теоремы Kh\ref{theoKh1}. 
\begin{theoKh}[{\rm \cite[основная теорема]{KhaD88}, \cite[основная теорема]{Kha89}, \cite[теорема 3.2.1]{Khsur}}]\label{theoKh2}
Пусть комплексные последовательности ${\sf Z}\subset \CC$ и ${\sf W}\subset \CC_{\rght}$ конечной верхней пло\-т\-н\-ости при порядке $1$. Тогда следующие три утверждения   эквивалентны:
\begin{enumerate}[{\rm (i)}]
\item\label{cxi} для любой функции $g\overset{\eqref{II1}}{\in} I^1_*({\sf W})$ при любом  $\e \in \RR_*^+$ найдется 
функция $f_{\e}\overset{\eqref{II1}}{\in} I^1_*({\sf Z})$, для которой 
\begin{equation}\label{fgiRKh}
\log \bigl|f_{\e}(iy)\bigr|\leq \log \bigl|g(iy)\bigr|
+\e|y|\text{ при  всех $y\in \RR\setminus E_{\e}$}, 
\quad \lambda_{\RR}(E_{\e})<+\infty, 
\end{equation}
т.\,е. подмножество $E_{\e}\subset \RR$ конечной лебеговой меры на $\RR$;
\item\label{cxii} для любого $\e \in \RR_*^+$ найдется постоянная $C \in \RR^+$, для которой
\begin{equation}\label{lZZ}
l_{\sf Z} (r, R) \overset{\eqref{df:l}}{\leq} l_{\sf W} (r, R ) 
+\e \log \frac{R}{r}+C \quad\text{при всех\/ $0<r<R<+\infty$};
\end{equation}
\item\label{cxiii} Существует функция $g\in I^1_*({\sf W})$ 
c $\Zero_g \cap\, \CC_{\rght} ={\sf W}$, для которой при любом  $\e \in \RR_*^+$ найдется $f_{\e}\in I^1_*({\sf Z})$ с ограничением 
\eqref{fgiRKh}.
\end{enumerate}
\end{theoKh}
\begin{remark}\label{remi}
Теоремы Kh\ref{theoKh1}, Kh\ref{theoKh1} и следствия Kh\ref{corolKh1}--Kh\ref{corolKh3} доказываются в первоисточниках \cite{Kha88}--\cite{KhDD92}  ad ovo без  использования теоремы Мальявена\,--\,Рубела  MR\ref{MR1} и следствий MR\ref{corolMR1}--MR\ref{corolMR3}, хотя, безусловно, ряд  идей доказательств  почерпнуты  из основополагающей работы П. Мальявена и Л.\,А. Рубела \cite{MR}. К таковым относится и идея применения выметания на мнимую ось, но уже не только меры или заряда с носителем на положительной полуоси $\RR^+$, но и мер и зарядов, <<размазанных>> по всей плоскости $\CC$.      
\end{remark}

\section{Субгармонические результаты}\label{111}

Сформулированные в \S~\ref{111} результаты не только распространяют результаты 
из \S~\ref{ss_RM} на меры и субгармонические функции в $\CC$, 
но и в значительной мере развивают и усиливают их и в традиционном варианте применительно к последовательностям точек/нулей в $\CC$ и целым функциям.  

\subsection{\bf Виды и плотности зарядов и мер}\label{2_3_1} 
Распространим логарифмические функции интервалов
для ${\sf Z}\subset \CC$  из  \eqref{df:l}  на заряды  $\mu\in \Meas (\CC)$:

\begin{subequations}\label{df:lm}
\begin{align}
l_{\mu}^{\rght}(r, R)&\overset{\eqref{df:dDl+}}{:=}\int_{\substack{r < | z|\leq R\\ \Re z >0}} 
\Re \frac{1}{ z} \dd \mu(z), \quad 0< r < R \leq+\infty ,
\tag{\ref{df:lm}+}\label{df:dDlm+}\\
l_{\mu}^{\lft}(r, R)&\overset{\eqref{df:dDl-}}{:=}\int_{\substack{r< |z|\leq R\\ 
\Re z<0}}
\Re \Bigl(-\frac{1}{ z}\Bigr) \dd \mu(z) ,  \quad 0< r < R \leq +\infty ,
\tag{\ref{df:lm}-}\label{df:dDlm-}\\
\intertext{которые порождают {\it логарифмическую функцию интервалов\/} для $\mu$:}
l_{\mu}(r, R)&\overset{\eqref{df:dDlL}}{:=}\max \{ l_{\mu}^{\lft}(r, R), l_{\mu}^{\rght}(r,
R)\}, \quad 0< r < R \leq +\infty .
\tag{\ref{df:lm}m}\label{df:dDlLm}
\end{align}
\end{subequations}

Введем  также некоторые считающие радиальные функции с весом. В  \cite[теорема A]{Kha99} они рассматривались в значительно более общей форме для последовательностей точек и множеств в $\CC^n$. Для комплексных чисел $z\in \CC$ главные значения аргументов $\arg z$ нам удобнее выбирать здесь из интервала $[-\pi/2, 3\pi /2)$.

\begin{definition}
Пусть $\mu\in \Meas(\CC)$.  Введем в рассмотрение \textit{считающую  функцию заряда\/  $\mu$ с весом $k\colon [-\pi/2, 3\pi /2) \to \RR$\/}:
 \begin{equation}\label{c)}
\mu(r;k):=\int_{\overline D(r)} k (\arg z)\dd \mu(z), 
\end{equation}
где $k$ --- расширенная числовая борелевская функция  на $[-\pi/2, 3\pi /2)$. 
\end{definition}
В частных случаях 
\begin{equation}\label{kcos}
k(\arg z):=\cos^+\arg z, 
\quad k(\arg z):=\cos^-\arg z, \quad z\in \CC_*,
\end{equation}
 из определений \eqref{df:lm} в обозначениях \eqref{c)}--\eqref{kcos}  при любых значениях  $0< r < R <+\infty$  интегрированием по частям получаем 
\begin{subequations}\label{l_mu}
\begin{align}
l_{\mu}^{\rght}(r, R)&\overset{\eqref{df:dDlm+}}{=}
\int_r^R \frac{\dd \mu(t;\cos^+)}{t}
\notag\\
&=\frac{\mu (R;\cos^+)}{R}-\frac{\mu (r;\cos^+)}{r}
+\int_r^R\frac{\mu (t;\cos^+)}{t^2} \dd t, 
\tag{\ref{l_mu}+}\label{l_mu_m+}
\\
l_{\mu}^{\lft}(r, R)&\overset{\eqref{df:dDlm-}}{=}
\int_r^R \frac{1}{t} \dd \mu(t;\cos^-)
\notag \\
&=\frac{\mu (R;\cos^-)}{R}-\frac{\mu (r;\cos^-)}{r}
+\int_r^R\frac{\mu (t;\cos^-)}{t^2} \dd t.  
\tag{\ref{l_mu}-}\label{l_mu_m-}
\end{align}
\end{subequations}
Согласно соотношениям \eqref{df:dDlLm}, \eqref{l_mu} очевидно следующее  
\begin{propos}\label{pr:lm}
Для $\mu\in \Meas(\CC)$  {\it конечного типа}
${\type_1}[\mu]\overset{\eqref{nuR}}{<}+\infty$ для логарифмической функции интервалов  
\begin{equation}\label{ml}
\overline l_{\mu}(r,R):=\max \left\{\int_r^R\frac{\mu (t;\cos^-)}{t^2} \dd t, \int_r^R\frac{\mu (t;\cos^+)}{t^2} \dd t\right\}
\end{equation}
при любом $r_0\in \RR_*^+$ имеет место соотношение 
\begin{equation}\label{mlO}
\bigl|\overline l_{\mu}(r,R) - l_{\mu}(r,R)\bigr |=O(1), \quad r_0\leq r<R<+\infty.
\end{equation}
\end{propos} 
\begin{remark}\label{rem:lm}
Для произвольной последовательности ${\sf Z}\subset \CC$ со считающей мерой $n_{\sf Z}$ из \eqref{df:divmn} полагаем 
\begin{equation*}
\overline l_{\sf Z}^{\rght}(r, R)\overset{\eqref{l_mu_m+}}{:=}l_{n_{\sf Z}}^{\rght}(r, R), \;
\overline l_{\sf Z}^{\lft}(r, R)\overset{\eqref{l_mu_m-}}{:=}l_{n_{\sf Z}}^{\lft}(r, R), 
\; 
\overline l_{\sf Z}(r,R)\overset{\eqref{ml}}{:=}\overline l_{n_{\sf Z}}(r,R).
\end{equation*}
По предложению \ref{pr:lm} ниже всюду в разделе \ref{111} различные \textit{логарифмические функции интервалов\/} $l_{\mu}$, $l_{\sf Z}$  из  \eqref{df:l}, \eqref{df:lm}можно заменить  соотв. на \textit{логарифмические функции интервалов\/} $\overline l_{\mu}$ из \eqref{l_mu}, \eqref{ml} и $\overline l_{\sf Z}$.
\end{remark}
Заряд $\mu\subset \Meas (\CC)$ {\it отделен от мнимой оси\/} $i\RR$, если  
\begin{equation}\label{con:dism}
\left(\liminf_{\stackrel{z\to\infty}{z\in \supp \mu}}\frac{| \Re  z |}{ | z|} >0 \right)
\overset{\eqref{con:dis}}{\Longleftrightarrow}
\left(\limsup_{\stackrel{z\to\infty}{z\in \supp \mu}}\frac{|\Im  z|}{ | z |} <1 \right).
\end{equation}
Ограничение \eqref{con:dis} геометрически означает, что найдется пара непустых открытых вертикальных углов, содержащих $i\RR_*$, и число $R\in \RR^+_*$, для которых множество  $\supp \mu \setminus D(R)$ не пересекается с этой парой углов.

Заряд $\mu\in \Meas (\CC)$ удовлетворяет {\it условию Бляшке\/} как 
{\it в правой\/ $\CC_{\rght}$,} так  и {\it в левой\/ ${_{\lft}}\CC$ полуплоскостях,\/} если  \cite[4.1]{KhI}
\begin{equation}\label{cB}
\int_{\CC\setminus D(r_0)}\Bigl| \Re \frac1{z}\Bigr| \dd |\mu|(z)<+\infty
\quad \text{для некоторого $r_0\in \RR_*^+$}.
\end{equation}
При условии \eqref{cB}  заряд $\mu$ называем иначе зарядом, удовлетворяющий \textit{условию Бляшке в\/ $\CC\setminus i\RR$. } В терминах 
логарифмических функций интервалов \eqref{df:lm}, \eqref{ml} для заряда $\mu$ конечного типа по предложению \ref{pr:lm} условие Бляшке 
\eqref{cB} эквивалентно  соотношениям 
\begin{equation}\label{Oy}
\overline l_{|\mu|}(r,R)\overset{\eqref{mlO}}{=} l_{|\mu|}(r,R)+O(1)\overset{\eqref{cB}}{=}O(1), 
\quad 0<r_0\leq r<R<+\infty. 
\end{equation}

Пусть $I:=[iy_1,iy_2]$ --- замкнутый интервал на $i\RR$, $-\infty<y_1<y_2<+\infty$. {\it Двустороннюю гармоническую меру для\/ $\CC\setminus i\RR$ в точке\/ $z\in \CC$} обозначаем как  функцию интервалов 
\begin{equation}\label{omega}
\omega (z,[iy_1,iy_2])\overset{\text{\cite[3.1]{KhI}}}{:=}\omega_{\CC\setminus i\RR}(z,I):=\frac1{\pi}
\int_{y_1}^{y_2}\Bigl|\Re \frac{1}{iy-z}\Bigr| \dd y ,
\end{equation} 
равную делённому на $\pi$ углу, под которым виден интервал $I$ из точки $z\in \CC$ \cite[(3.1)]{kh91AA}, \cite[1.2.1, 3.1]{KhI}.
Если мера  $\mu \in \Meas^+(\CC)$ конечного типа удовлетворяет условию Бляшке в $\CC\setminus i\RR$, то  определено классическое выметание $\mu^{\bal}$ (рода $0$ \cite[определение 3.1]{KhII}) этой меры на мнимую ось \cite[следствие 4.1, теорема 4]{KhI}, задаваемое через функцию распределения $(\mu^{\bal})^{i\RR}$ из 
\eqref{nuiR}  меры $\mu^{\bal}$  с носителем $\supp \mu \subset i\RR $  как 
\begin{equation}\label{mubal}
(\mu^{\bal})^{i\RR}(iy)
\overset{\eqref{omega}}{:=}\begin{cases}
\int\limits_{\CC} \omega\bigl(z, [0,iy]\bigr)\dd \nu(z)\quad &\text{при $y\geq 0$},\\
-\int\limits_{\CC} \omega\bigl(z, [iy,0)\bigr)\dd \nu(z)\quad &\text{при $y< 0$},
\end{cases} 
\end{equation}

Заряд $\mu\in \Meas (\CC)$ \textit{конечной внешней плотности Кахана,\/} если 
$\mu$ \textit{конечной верхней плотности,}   удовлетворяет  \textit{условию Бляшке\/ \eqref{cB} в\/ $\CC\setminus i\RR$\/}
и существует \textit{липшицева функция} 
\begin{equation}\label{Lipk}
k\colon i\RR \to \RR, \quad l_k:=\sup_{\stackrel{y,y'\in \RR}{y\neq y'}}
\frac{|k(iy)-k(iy')|}{|y-y'|}<+\infty, 
\end{equation}
для которой конечен \textit{логарифмический интеграл\/} \cite{Koosis88}, \cite{Koosis92}, \cite{Koosis96} 
\begin{equation}\label{J2mu}
J\bigl(|(\mu^{\bal})^{i\RR}-k|\bigr)
\overset{\eqref{fK:abp+}}{=}\Bigl(\int_{-\infty}^{-1}+
\int^{+\infty}_{1}\Bigr)\frac{\bigl|(\mu^{\bal})^{i\RR}(iy)-k(iy)\bigr|}{y^2}\dd y
<+\infty.
\end{equation}
Наряду с интегралом \eqref{J2mu} используем и интегралы из  \cite [4.6, (4.16)]{KhII}:
\begin{equation}\label{fK:abp}
J(r,R; v):=
J_{-\frac{\pi}{2},\frac{\pi}{2}}^{[1]}(r, R;v)
\overset{\text{ \cite[4.6,(4.16)]{KhII}}}{:=}
\int_r^{R} \frac{v(-iy)+v(iy)}{y^2} \dd y
\end{equation}
при $0<r<R\leq +\infty$. Заряд $\nu\in \Meas(\CC)$ 
удовлетворяет\footnote{Это условие, вообще говоря, действительно слабее, чем выполнение условия Бляшке из \eqref{cB} одновременно в правой и левой полуплоскостях $\CC_{\rght}$ и ${_{\lft}}\CC$.}  
{\it слабому двустороннему условию Бляшке относительно мнимой оси\/ $i\RR$ рода\/ $1$} \cite[определение 2.2.1]{kh91AA}, \cite[определение 2.2.1]{KhDD92}, \cite[(3.23)]{KhII},  если  при некотором $r_0>0$
\begin{equation}\label{cB2}
\left| \int_{D(r)\setminus D(r_0)}\Bigl|\Re\frac{1}{z} \Bigr| \dd \nu(z)\right|=O(1), \quad r\to +\infty,
\end{equation}
Заряд $\nu\in \Meas(\CC)$ удовлетворяет  {\it условию Линделёфа рода\/ $1$}  \cite[\S~2]{kh91AA}, \cite[определение 2.3.1]{KhDD92}, \cite[определение 4.7]{KhI}, если  для некоторого $r_0\in \RR^+_*$
\begin{equation}\label{con:Lp+} \Biggl|\;\int_{D(r)\setminus D(r_0)} \frac{1}{z}
\dd \nu(z) \Biggr|=O(1) \quad\text{при $r\to +\infty$,} 
\end{equation} 
\begin{remark}\label{remL}
Очевидно, для меры $\nu\in \Meas^+(\CC)$, удовлетворяющей условию Линделёфа   
\eqref{con:Lp+} рода $1$,  при всех  $0<r_0\leq r<R<+\infty$ имеем  
\begin{equation}\label{lL}
l_{\nu}^{\rght}(r,R)=l_{\nu}^{\lft}(r,R)+O(1)=l_{\nu}(r,R)+O(1).
\end{equation}
Если $\upsilon\in \Meas^+(\CC)$ --- мера Рисса субгармонической функции \textit{конечного типа,} то мера $\upsilon$ \textit{конечного типа\/} и удовлетворяет \textit{условию Линделёфа\/} рода $1$. Следовательно, для такой меры имеет место  \eqref{lL}, где по предложению \ref{pr:lm} функцию интервалов $l_{\nu}$  можно заменить   на  $\overline l_{\nu}$  из \eqref{l_mu}--\eqref{ml}.

Обратно, если $u\in \sbh_*(\CC)$ --- функция \textit{порядка\/} \cite[2.1, (2.1a)]{KhI} 
\begin{equation}\label{ord}
\ord_{\infty}[u]\overset{\eqref{df:MCBm}}{:=}\limsup\frac{\log \bigl(1+M_u^+(r)\bigr)}{\log r}\leq 1
\end{equation} 
с мерой Рисса $\upsilon_u\in \Meas^+(\CC)$  \textit{конечного типа,\/}
удовлетворяющей\textit{ условию Линделёфа\/} \eqref{con:Lp+} рода $1$, 
то\textit{ функция $u$ конечного типа,\/} что следует, например, из представления Вейерштрасса\,--\,Адамара для субгармонических функций конечного порядка \cite[гл.~4, 4.2]{HK}.
\end{remark}

\subsection{Основные теоремы и следствия}

\begin{theorem}[ср. с теоремой Kh\ref{theoKh1}]\label{th1}
Пусть заданы {\it меры\/} $\nu, \mu \in \Meas^+(\CC)$,
\begin{itemize}
\item[{\bf [meas]}]\label{nu} 
 представленные  в виде сумм мер
\begin{equation}\label{numu}
\begin{cases}
\nu&:=\nu_0+\nu_{\infty},\quad \nu_0, \nu_{\infty} \in \Meas^+(\CC),\\
\mu&:=\mu_0+\mu_{\infty},\quad  \mu_0, \mu_{\infty} \in \Meas^+(\CC),
\end{cases}
\end{equation} 
где меры\/ 
$\nu_{\infty}, \mu_{\infty}$  \underline{отделены  от мнимой оси}\/ $i\RR$ в смысле \eqref{con:dism}.
\end{itemize}
Кроме того, пусть 
\begin{equation}\label{suppm}
\type_1[\nu+\mu]\overset{ \eqref{fden}}{<}+\infty,
\quad  \supp \mu \subset \CC_{\overline \rght}, 
\end{equation} 
мера  $\mu_0$ удовлетворяет\underline{ условию Бляшке\/} \eqref{cB}  в\/ $\CC\setminus i\RR$, 
а мера  $\nu_0$ \underline{конечной} \underline{внешней плотности Кахана} в смысле \eqref{mubal}--\eqref{J2mu}. 

Тогда следующие три утверждения   эквивалентны.
\begin{enumerate}[{\rm {\bf a}1.}]
\item\label{a1}  Для \underline{любой} функции $M\in \sbh_*(\CC)$ \underline{конечного типа} $\type_1[M]\overset{\eqref{typev}}{<}+\infty$ с мерой Рисса $\mu_M\geq \mu$, для \underline{любой} функции $u\in \sbh_*(\CC)$ с мерой Рисса $\upsilon_u=\nu$,  для \underline{любого} числа $p\in \RR^+$ \underline{найдётся} такая \underline{целая функция}  $f\in \Hol_*(\CC)$, что 
\begin{subequations}\label{fgiRKh+0b}
\begin{align}
\type_1\bigl[u+\log |f|\bigr]<+\infty,&\quad \text{где, очевидно, $u+\log |f|\in \sbh_*(\CC)$,} 
\tag{\ref{fgiRKh+0b}t}\label{fgiRKh+0bt} 
\\
\intertext{и выполнены неравенства}
u(iy)+\log \bigl|f(iy)\bigr|
\overset{\eqref{df:MCBb}}{\leq}& B_{M}\Bigl(iy, \frac1{(1+|y|)^{p}}\Bigr)
\quad
\text{ для всех $ y\in \RR$}. 
\tag{\ref{fgiRKh+0b}i}\label{fgiRKh+0bi} 
\end{align}
\end{subequations}
\item\label{a2}\underline{ Существует}  постоянная $C\in \RR^+$, для которой 
\begin{equation}\label{lZZ+0b}
l_{\nu} (r, R) \overset{\eqref{df:lm}}{\leq} l_{\mu} (r, R ) +C  \quad \text{при всех\/ $1\leq r<R<+\infty$},
\end{equation}
где $l_{\nu}$ и/или $l_{\mu}$ можно заменить  соотв. на 
$\overline l_{\nu}$ и/или $\overline l_{\mu}$ из \eqref{l_mu}--\eqref{ml}.
\item\label{a3} \underline{Существуют}
\begin{enumerate}[{\rm i)}]
\item\label{b3i}  функция $M\in \sbh_*(\CC)$ \underline{конечного типа} с сужением  меры Рисса  \begin{equation}\label{muC}
\mu_M\bigm|_{\CC_{\rght}}\overset{\eqref{suppm}}{=}\mu, 
\end{equation}
\item\label{b3ii} функция $U\in \sbh_*(\CC)$ \underline{конечного типа} с  мерой  Рисса $\upsilon_U\geq \nu$, 
 \item\label{b3iii} число $y_0\in \RR^+$ и функция $q\colon i\RR_* \to \RR_*^+$, $q(i\cdot)\colon \RR_*\to \RR_*^+$ возрастает на $\RR_*^+$ и убывает на $-\RR_*^+$  с ограничением 
 \begin{equation}\label{Jq}
J(q)\overset{\eqref{fK:abp+}}{:=}\int_1^{+\infty}\frac{q(iy)+q(-iy)}{y^2}\dd y <+\infty,
\end{equation}
\end{enumerate}
 для которых  имеют место неравенства 
\begin{equation}\label{fgiRKh+b}
U(iy)\overset{\eqref{df:MCBc}}{\leq} C_{M}\bigl(iy, q(iy)\bigr)
+q(iy)\quad\text{при всех $ |y|\geq y_0$}. 
\end{equation}
\end{enumerate}
\end{theorem}

\begin{theorem}[ср. со  следствием Kh\ref{corolKh2}]\label{th2}
Пусть в соглашениях\/ {\bf[meas]} из теоремы\/ {\rm \ref{th1}} выполнено 
\eqref{suppm}, а также  ${\type_1}[\nu_0+\mu_0]\overset{\eqref{fden}}{=}0$. 

Тогда следующие три утверждения   эквивалентны.
\begin{enumerate}[{\rm {\bf b}1.}]
\item\label{b1} Для \underline{любой} функции $M\in \dsbh_*(\CC)$ \underline{конечного типа} с мерой Рисса $\mu_M \geq \mu$, для \underline{любой} функции $u\in \sbh_*(\CC)$ с мерой Рисса $\upsilon_u=\nu$, для \underline{любого} числа\/ $p\in \RR^+$
найдётся такая \underline{целая функция}  $f\in \Hol_*(\CC)$, 
что имеет место\/  \eqref{fgiRKh+0bt} и 
\begin{equation}\label{fgiRKh+0}
u(iy)+\log \bigl|f(iy)\bigr|\overset{\eqref{df:MCBb}}{\leq} B_{M}\Bigl(iy, \frac1{(1+|y|)^{p}}\Bigr)+o\bigl(|y|\bigr)\text{ при $ y\to \pm \infty$}. 
\end{equation}

\item\label{b2} \underline{Существуют}  функция 
\begin{equation}\label{m0}
m\colon {\RR}_*^+\to \RR_*^+, \quad \lim\limits_{r\to +\infty} m(r)=0,
\end{equation} 
и  постоянная $C\in \RR^+$, для которых 
\begin{equation}\label{lZZ+0}
l_{\nu} (r, R) \overset{\eqref{df:lm}}{\leq} l_{\mu} (r, R ) 
+m(r) \log \frac{R}{r}+C  \quad\text{при всех\/ $1\leq r<R<+\infty$},
\end{equation}
где $l_{\nu}$ и/или $l_{\mu}$ можно заменить  соотв. на 
$\overline l_{\nu}$ и/или $\overline l_{\mu}$ из \eqref{l_mu}--\eqref{ml}.

\item\label{b3}  \underline{Существуют} функции $M$, $U$ из пп.~{\bf a}{\rm\ref{b3i})--\ref{b3ii})}  теоремы {\rm\ref{th1}} и   функция \begin{equation}\label{q0}
q\colon i\RR_*\to\RR_*^+, \quad q(iy)=o(|y|)\quad\text{при $y\to \pm\infty$,} 
\end{equation}
для которых имеет место соотношение
\begin{equation}\label{fgiRKh+0C}
U(iy)\overset{\eqref{df:MCBc}}{\leq} C_{M}\bigl(iy, q(iy) \bigr)+o\bigl(|y|\bigr)\text{ при $ y\to \pm \infty$}. 
\end{equation}

\end{enumerate}
\end{theorem}
В следующей теореме \ref{th3} мы отказываемся от представления мер 
$\nu, \mu$ из   \eqref{numu} и соотв. от условий отделённости мер от мнимой оси из {\bf [meas]}.   

\begin{theorem}[{\rm ср. с теоремой Kh\ref{theoKh2}}]\label{th3}
Пусть   для мер  $\nu, \mu \in \Meas^+(\CC)$ имеет место  \eqref{suppm}.
 Тогда следующие три утверждения   эквивалентны.
\begin{enumerate}[{\rm {\bf c}1.}]
\item\label{c1} 
Для \underline{любой} функции $M\in \dsbh_*(\CC)$ \underline{конечного типа} с мерой Рисса $\mu_M \geq \mu$, для \underline{любой} функции $u\in \sbh_*(\CC)$ с мерой Рисса $\upsilon_u=\nu$, для  \underline{любых} чисел  $\e\in \RR^+_*$ и $p\in \RR^+$
найдётся такая \underline{целая функция}  $f\in \Hol_*(\CC)$, 
что имеет место\/  \eqref{fgiRKh+0bt} и  
\begin{equation}\label{fgiRKh+}
u(iy)+\log \bigl|f(iy)\bigr|\overset{\eqref{df:MCBb}}{\leq} B_{M}\Bigl(iy, \frac1{(1+|y|)^{p}}\Bigr)+\e|y|\text{ при  всех $y\in \RR$}. 
\end{equation}
\item\label{c2} Для \underline{любого} числа\/ $\e \in \RR_*^+$ найдется постоянная $C_\e\in \RR^+$, для которой
\begin{equation}\label{lZZ+}
l_{\nu} (r, R) \overset{\eqref{df:lm}}{\leq} l_{\mu} (r, R ) 
+\e \log \frac{R}{r}+C_\e \quad\text{при всех\/ $1\leq r<R<+\infty$},
\end{equation}
где $l_{\nu}$ и/или $l_{\mu}$ можно заменить  соотв. на 
$\overline l_{\nu}$ и/или $\overline l_{\mu}$ из \eqref{l_mu}--\eqref{ml}.
\item\label{c3}  
\underline{Существует} такая функция $M$ из п.~{\bf a}{\rm\ref{b3i})}  теоремы {\rm\ref{th1}}, что  при   \underline{любом} $\e\in \RR^+_*$  \underline{найдётся}     функция $U$ из  п.~{\bf a}{\rm\ref{b3ii})}  теоремы {\rm\ref{th1}},  для которой 
\begin{equation}\label{fgiRKh+C}
U(iy)\overset{\eqref{df:MCBc}}{\leq} C_{M}
\bigl(iy, \e |y|\bigr) +\e|y|\quad\text{ при  всех $y\in \RR$}. 
\end{equation}
\end{enumerate}
\end{theorem}

\section{Доказательства импликаций 
{\bf x}\ref{a1}${\Rightarrow}${\bf x}\ref{a3}${\Rightarrow}${\bf x}\ref{a2},  при 
$\bf x:=a,b,c$\\ для основных теорем \ref{th1}--\ref{th3}}
Знак вопроса ? над импликацией  или эквивалентностью 
в виде $\overset{\mathbf{?}}{\Rightarrow}$ или  $\overset{\mathbf{?}}{\Leftrightarrow}$
будет означать, что доказывается эта импликация  или эквивалентность.
 
\subsection{{\bf x}\ref{a1}$\overset{\mathbf{?}}{\Rightarrow}${\bf x}\ref{a3}}\label{1imp}
Для меры $\mu\in \Meas^+(\CC)$ с $\type_1[\mu]<+\infty$ 
при условии  \eqref{suppm} даже  \textit{\underline{без} представления} $\mu\overset{\eqref{numu}}{=}\mu_0+\mu_{\infty}$ из {\bf [meas]}
сразу следует существование функции $M\in \sbh_*(\CC)$ с $\type_1[M]<+\infty$ с  сужением меры Рисса $\mu_M\bigm|_{\CC_{\rght}}=\mu$, как это требуется в \eqref{muC}. Для этого достаточно сначала выбрать меру  $\mu_M:=\mu+\mu_-$, где 
$\mu_(S):=\mu(-S)$ для всех борелевских $S\subset \CC$, которая, очевидно, удовлетворяет условию Линделёфа \eqref{con:Lp+} рода $1$. 
После этого  можно  построить требуемую функцию $M$ конечного типа с помощью классического интегрального представления Вейерштрасса\,--\,Адамара   \cite[4.1--4.2]{HK} через заряд $\mu_M$ и  субгармоническое ядро Вейерштрасса\,--\,Адамара рода $1$ \cite[6.1]{KhI}. В качестве функции 
$q$ соотв. в {\bf a}\ref{a3}\ref{b3iii}) с \eqref{Jq}, в {\bf b}\ref{b3} с \eqref{q0}
выберем $q\equiv 1$ на $\RR^+$, а в {\bf c}\ref{c3} положим $q(iy)\equiv \e|y|$.
В качестве функции $U$ в {\bf a}\ref{a3}, {\bf b}\ref{b3}, 
{\bf c}\ref{c3} выберем функцию $u+\log|f|=:U$ соотв. из 
{\bf a}\ref{a1}\eqref{fgiRKh+0b}, {\bf b}\ref{b1}\eqref{fgiRKh+0}, {\bf c}\ref{c1}\eqref{fgiRKh+}. При таком выборе соотношения  
{\bf a}\ref{a1}\eqref{fgiRKh+0b}, {\bf b}\ref{b1}\eqref{fgiRKh+0}, {\bf c}\ref{c1}\eqref{fgiRKh+} влекут за собой соотв. соотношения 
{\bf a}\ref{a3}\eqref{fgiRKh+b}, {\bf b}\ref{b3}\eqref{fgiRKh+0C}, 
{\bf c}\ref{c3}\eqref{fgiRKh+C}, поскольку для любой функции
$M\in \sbh(\CC)$ имеем $B_M\overset{\eqref{df:MCB}}{\leq} C_M$ \cite[теорема 2.6.8]{Rans}.
\begin{remark}\label{rem1_3}
При доказательстве  импликаций  
{\bf x}\ref{a1}${\Rightarrow}${\bf x}\ref{a3} использованы лишь ограничения \eqref{suppm} на меры $\nu,\mu\in \Meas^+(\CC)$, а именно: 
\begin{enumerate}[1)]
\item\label{1_c} меры $\nu,\mu$ \textit{конечного типа при порядке\/} $1$, что  \textit{необходимо\/}  для существования  $u, M\in \sbh_*(\CC)$ с мерами Рисса  соотв. $\upsilon_u\geq \nu$, $\mu_M\geq \mu$; 
\item\label{2_c}  $\supp \mu\overset{\eqref{suppm}}{\subset} \CC_{\overline \rght}$, что фактически тоже \textit{необходимо\/} для существования функции $M$ из 
из п.~{\bf a}{\rm\ref{b3i})}  теоремы {\rm\ref{th1}}.
\end{enumerate}
Представления же {\bf [meas]}\eqref{numu} для мер $\nu,\mu\in \Meas^+(\CC)$
не применялись.  
\end{remark}
\subsection{{\bf x}\ref{a3}$\overset{\mathbf{?}}{\Rightarrow}${\bf x}\ref{a2}}\label{2imp}
Пусть выполнено  {\bf a}\ref{a3}, {\bf b}\ref{b3} или  {\bf c}\ref{c3}
 с функциями $q$ соотв. из  {\bf a}\ref{b3iii}) с ограничением \eqref{Jq}, из 
\eqref{q0} или вида $q(iy)=q_\e (iy)=\e |y|$, $y\in \RR$, с $\e\in \RR_*^+$. 
Не умаляя общности, в каждом из трёх случаев можно считать функцию $q(i\cdot)\colon \RR_*\to \RR^+$ гладкой,  возрастающей на $\RR^+$ и убывающей на $-\RR^+$ с ограничением  $q(iy)\leq a|y|=q_a(iy)$, $y\in \RR_*$ при некотором $a\in \RR_*^+$. Тогда открытое множество   
\begin{equation}\label{Oq}
O_q:=\bigl\{z\in \CC\colon -q(i\Im z)< \Re z< q(i\Im z) \bigr\}
\end{equation}
--- объединение областей $O_q^{\up}:=O_q\cap \CC^{\up}$ и  $
(O_q)_{\lw}:=O_q\cap \CC_{\lw}$ с  границей $\partial D_q$, позволяющей определить субгармоническую в $\CC$ функцию \cite[3.8]{HK}, \cite[теорема 2.4.5]{Rans}
\begin{equation}\label{Uq}
M_q=\begin{cases}
M\quad \text{\textit{\underline{на} дополнении\/} $\CC\setminus O_q$},\\
\text{\textit{гармоническое продолжение\/} $M$}\\
\text{\textit{внутрь\/  $O_q^{\up}$ и\/  $
(O_q)_{\lw}$ \underline{на}\/ $O_q$},}
\end{cases}
\end{equation}
не превышающую на $\CC$ функцию $M_{q_a}$ --- классическое выметание рода $0$
функции $M$ из двух открытых вертикальных углов раствора 
$<\pi$ с вершиной в нуле \cite[6.2, теорема 7]{KhI}. При этом функция $M_{q_a}$ конечного типа \cite[теорема 8]{KhI}, \cite[основная теорема]{Kha91}. Следовательно, и функция $M_q\leq M_{q_a}$ конечного типа. По построению    \eqref{Oq}--\eqref{Uq} имеем 
\begin{equation}\label{CMiy}
C_M\bigl(iy, q(iy)\bigr)\leq M_q(iy)\quad\text{для всех $y\in \RR$},
\end{equation}
а мера Рисса $\mu_{M_q}$ функции $M_q$ даёт  сужения
\begin{equation}\label{mud}
\mu_{M_q}\bigm|_{\CC\setminus O_q}=
\mu_M\bigm|_{\CC\setminus O_q}, \quad 
\mu_M^q:=\mu_M\bigm|_{O_q}.
\end{equation} 
Из условий-неравенств \eqref{fgiRKh+b}, \eqref{fgiRKh+0C}, \eqref{fgiRKh+C}
   и из \eqref{CMiy} в обозначении  \eqref{fK:abp}
\begin{multline}\label{JUM}
J(r,R;U)\overset{\eqref{fK:abp}}{=}\int_r^R\frac{U(iy)+U(iy)}{y^2} \dd y \\
\overset{ \eqref{fgiRKh+b}, \eqref{fgiRKh+0C}, \eqref{fgiRKh+C}}{\leq}  
\int_r^R\frac{C_M(iy)+q(iy)+C_M(-iy)+q(-iy)}{y^2} \dd y\\
 =J(r,R;C_M)+J(r,R;q)\overset{ \eqref{CMiy}}{\leq}
J(r,R;M_q)+J(r,R;q)
\end{multline}
при всех $1\leq r<R<+\infty$. 
\begin{lemma}[{\rm \cite[предложение 4.1, (4.19)]{KhII}}]\label{lemJ} 
Пусть функция $u\in \sbh_*(\CC)$ конечного типа с мерой Рисса $\upsilon$. Тогда 
при $1\leq r<R<+\infty$ имеем 
\begin{equation}\label{Jl}
\bigl|J(r,R;u)-l_{\upsilon}^{\rght}(r,R)\bigr|+\bigl|J(r,R;u)-l_{\upsilon}^{\lft}(r,R)\bigr|
=O(1). 
\end{equation}
\end{lemma}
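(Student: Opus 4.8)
My plan is to reduce the two-sided estimate in \eqref{Jl} to a single half-plane bound, then read that bound off the genus-$1$ Riesz representation of $u$ by computing the relevant kernel integral, controlling the truncation errors by the finite density of $\upsilon$.

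\emph{Reduction.} Since $u$ is of finite type at order $1$, its Riesz mass $\upsilon$ has finite density, $\type_1[\upsilon]\overset{\eqref{fden}}{<}+\infty$, and the genus-$1$ hypothesis furnishes the convergence condition \eqref{con:Lp+}; by Remark~\ref{remL} this already gives $l_{\upsilon}^{\rght}(r,R)=l_{\upsilon}^{\lft}(r,R)+O(1)$ as in \eqref{lL}. Hence it suffices to prove $\bigl|J(r,R;u)-l_{\upsilon}^{\rght}(r,R)\bigr|=O(1)$; the second summand of \eqref{Jl} then follows by the triangle inequality together with \eqref{lL}.

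\emph{The kernel identity.} Write $u=h+\int_{\CC}\kappa(\,\cdot\,,\zeta)\,\dd\upsilon(\zeta)$ with the genus-$1$ primary kernel $\kappa(z,\zeta):=\log\bigl|1-z/\zeta\bigr|+\Re(z/\zeta)$; the potential converges because $\type_1[\upsilon]<+\infty$ forces $\int_{|\zeta|>1}|\zeta|^{-2}\,\dd\upsilon<+\infty$, and the remainder $h$ is an entire harmonic function of order $\le 1$, i.e.\ $h(z)=\Re(cz)+b$. As $h(iy)+h(-iy)=2b$, the harmonic part contributes $2b(1/r-1/R)=O(1)$ to $J$. In the symmetrised combination the genus correction cancels,
\begin{equation*}
\Bigl(\log\bigl|1-\tfrac{iy}{\zeta}\bigr|+\Re\tfrac{iy}{\zeta}\Bigr)+\Bigl(\log\bigl|1+\tfrac{iy}{\zeta}\bigr|+\Re\tfrac{-iy}{\zeta}\Bigr)=\log\bigl|1+\tfrac{y^{2}}{\zeta^{2}}\bigr|,
\end{equation*}
so after interchanging the order of integration (Tonelli being justified by the tail bounds below) one gets $J(r,R;u)=\int_{\CC}\Psi_{r,R}(\zeta)\,\dd\upsilon(\zeta)+O(1)$, where $\Psi_{r,R}(\zeta)$ is the truncated integral of $y^{-2}\log|1+y^{2}/\zeta^{2}|$. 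The substitution $y=|\zeta|s$ reduces $\Psi$ to a universal one-variable integral, and over the whole ray
\begin{equation*}
\int_{0}^{\infty}\frac1{y^{2}}\log\Bigl|1+\frac{y^{2}}{\zeta^{2}}\Bigr|\,\dd y=\pi\Bigl|\Re\frac1\zeta\Bigr|.
\end{equation*}
Thus a unit of $\upsilon$ at $\zeta$ contributes a fixed multiple of $\bigl|\Re\zeta^{-1}\bigr|$, the density common to $l^{\rght}_{\upsilon}$ and $l^{\lft}_{\upsilon}$ in \eqref{df:lm}; the multiplicative constant is absorbed by the normalisation of $J=J^{[1]}_{-\pi/2,\pi/2}$ in \eqref{fK:abp}. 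This is the Carleman--Nevanlinna phenomenon: $J$ weights $\upsilon$ symmetrically in both half-planes, whereas $l^{\rght}_{\upsilon}$ sees only $\CC_{\rght}$, the two being reconciled precisely by \eqref{lL}.

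\emph{Error analysis and main obstacle.} Writing $\Psi_{r,R}(\zeta)$ as the full-ray value minus the two tails $\int_{0}^{r/|\zeta|}$ and $\int_{R/|\zeta|}^{\infty}$, I would split $\int_{\CC}\dots\dd\upsilon$ over $\{|\zeta|\le r\}$, $\{r<|\zeta|\le R\}$ and $\{|\zeta|>R\}$. Using $\log|1+s^{2}e^{-2i\arg\zeta}|=O(s^{2})$ for small $s$ and $=2\log s+O(s^{-2})$ for large $s$, each region yields, after integration against $\dd\upsilon$, one of the expressions
\begin{equation*}
r\!\int_{r}^{R}\!\frac{\dd\upsilon}{|\zeta|^{2}},\quad \frac1R\!\int_{r}^{R}\!\log\frac{R}{|\zeta|}\,\dd\upsilon,\quad \frac1r\!\int_{|\zeta|\le r}\!\log\frac{r}{|\zeta|}\,\dd\upsilon,\quad (R-r)\!\int_{|\zeta|>R}\!\frac{\dd\upsilon}{|\zeta|^{2}},
\end{equation*}
and I claim each is $O(1)$ uniformly in $1\le r<R$ by integration by parts using $\upsilon^{\rad}(t)=O(t)$. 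The genuinely delicate points — where I expect essentially all the work to lie — are that the far-field term carries a factor $\tfrac{r}{R}\log\tfrac{R}{r}\le\sup_{x\ge 1}\tfrac{\log x}{x}<+\infty$, and that the inner term requires the local integrability $\int_{|\zeta|<1}\log\tfrac1{|\zeta|}\,\dd\upsilon<+\infty$ (valid since $u\not\equiv-\infty$), so that the stray logarithms are absorbed rather than accumulating a $\log(R/r)$. Carrying this bookkeeping out gives $J(r,R;u)=\tfrac12\bigl(l^{\rght}_{\upsilon}(r,R)+l^{\lft}_{\upsilon}(r,R)\bigr)+O(1)$, which together with \eqref{lL} is exactly \eqref{Jl}; by contrast the representation step and the kernel evaluation are routine once the normalisation is fixed.
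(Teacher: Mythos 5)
The paper gives no internal proof of Lemma~\ref{lemJ}: it is imported verbatim from Proposition~4.1 of \cite{KhII}, so there is nothing in this text to compare you against line by line. Your route --- the genus-$1$ Hadamard--Brelot representation of $u$, symmetrisation of the primary kernel over $\pm iy$ so that the genus correction cancels, the exact evaluation $\int_0^{\infty}y^{-2}\log|1+y^2/\zeta^2|\,\dd y=\pi\,|\Re\zeta^{-1}|$ (which I checked and is correct), and truncation errors controlled by $\upsilon^{\rad}(t)=O(t)$ --- is a legitimate, self-contained way to reach \eqref{Jl}. The standard proof of the cited proposition goes instead through Carleman's formula for the half-discs in $\CC_{\rght}$ and in the left half-plane: that yields $|J-l^{\rght}_{\upsilon}|=O(1)$ and $|J-l^{\lft}_{\upsilon}|=O(1)$ separately (the arc term being $O(1)$ by finite type), with no need for the representation theorem, the explicit kernel integral, or the a priori relation \eqref{lL}; your approach instead buys the two estimates from one symmetric computation plus the Lindel\"of condition, and your appeal to Remark~\ref{remL} for \eqref{con:Lp+} and \eqref{lL} is legitimate. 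Two small technical points: Tonelli does not apply to the signed kernel (you need Fubini with absolute integrability near the logarithmic singularities at $\zeta=\pm iy$), and the representation should use $\log|z-\zeta|$ rather than the primary kernel for the bounded part of $\upsilon$; both are routine.

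The one place where the argument as written does not close is the constant. Your own computation gives, per unit of $\upsilon$ at $\zeta$, the contribution $\pi|\Re\zeta^{-1}|$, hence $J(r,R;u)=\pi\bigl(l^{\rght}_{\upsilon}(r,R)+l^{\lft}_{\upsilon}(r,R)\bigr)+O(1)=2\pi\,l^{\rght}_{\upsilon}(r,R)+O(1)$ if $J$ is the unnormalised integral printed in \eqref{fK:abp}; your asserted conclusion $J=\tfrac12(l^{\rght}_{\upsilon}+l^{\lft}_{\upsilon})+O(1)$ silently divides by $2\pi$, and ``absorbed by the normalisation'' is not a proof. The discrepancy is real: for $u=\log\bigl|\sin(\pi z)/(\pi z)\bigr|$ with $\upsilon=\sum_{n\neq 0}\delta_n$ one has $\int_r^R y^{-2}\bigl(u(iy)+u(-iy)\bigr)\dd y=2\pi\log(R/r)+O(1)$ while $l^{\rght}_{\upsilon}(r,R)=\log(R/r)+O(1)$, so with \eqref{fK:abp} read literally the conclusion \eqref{Jl} fails, and a factor $\tfrac1{2\pi}$ must be hiding in the definition of $J^{[1]}_{-\pi/2,\pi/2}$ in \cite{KhII}. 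You should track this constant explicitly rather than postulate it; once $J$ carries the factor $\tfrac1{2\pi}$, your computation does give exactly \eqref{Jl}.
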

Применение леммы \ref{lemJ} к каждой из функций $U$ и $M_q$
ввиду  \eqref{JUM} даёт
\begin{multline}\label{lUMq}
l_{\nu}(r,R)\leq l_{\upsilon_U}(r,R)\leq l_{\mu_{M_q}}(r,R)+
J(r,R;q)+O(1) \\
\overset{\eqref{mud}}{\leq}  
l_{\mu_M}(r,R)+l_{\mu_M^q}(r,R)+J(r,R;q)+O(1),\quad 1\leq r<R<+\infty. 
\end{multline}
Мера $\mu_{M}$ удовлетворяет условию Линделёфа рода $1$, поэтому по замечанию \ref{remL} из  \eqref{lL}, \eqref{lUMq} и \eqref{muC} следует  
\begin{multline}\label{lUMqm}
l_{\nu}(r,R)
\overset{\eqref{lL},\eqref{mud}}{=}
l_{\mu_M}^{\rght}(r,R)+l_{\mu_M^q}(r,R)+J(r,R;q)+O(1) \\
\overset{\eqref{muC},\eqref{suppm}}{=}
l_{\mu}(r,R)+l_{\mu_M^q}(r,R)+J(r,R;q)+O(1),\;1\leq r<R<+\infty,  
\end{multline}
где о мере $\mu_M^q\in \Meas^+(\CC)$ важно  помнить то, что 
\begin{equation}\label{muMq}
\type_1[\mu_M^q]<+\infty, \quad 
\supp \mu_M^q\overset{\eqref{mud},\eqref{Oq}}{\subset} \clos O_q. 
\end{equation}

\subsubsection{\bf Завершение {\bf a}\ref{a3}$\overset{\mathbf{?}}{\Rightarrow}${\bf a}\ref{a2}} Будет использована
\begin{lemma}\label{la2}
Пусть $q\colon i\RR_* \to \RR^+$ из  {\rm {\bf a}\ref{b3iii})} с ограничением \eqref{Jq}, 
\begin{equation}\label{eta1}
\eta\in \Meas^+(\CC), \quad \type_1[\eta]<+\infty, 
\quad\supp \eta \overset{\eqref{Oq}}{\subset} \clos O_q.
\end{equation}
Тогда мера $\eta$ удовлетворяет условию Бляшке \eqref{cB}--\eqref{Oy} в $\CC\setminus i\RR$.
\end{lemma}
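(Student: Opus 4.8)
The plan is to derive the boundedness \eqref{cB} for $\eta$ directly from the convergence \eqref{Jq} of $J(q)$, exploiting that $\supp\eta$ is confined to the thin horn $\clos O_q$ and that $\eta$ has finite density. The first observation is that \eqref{Jq} forces $q(iy)=o(|y|)$ as $|y|\to\infty$: since $y\mapsto q(iy)$ increases on $\RR_*^+$ (item {\rm {\bf a}\ref{b3iii})}), for $y>0$ large one has $\int_y^{2y}q(it)t^{-2}\dd t\geq q(iy)\bigl(\tfrac1y-\tfrac1{2y}\bigr)=\tfrac{q(iy)}{2y}$, and the left side, being the tail of a convergent integral, tends to $0$; the analogous monotonicity on $-\RR_*^+$ gives $q(-iy)=o(y)$. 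Consequently $\clos O_q$ is asymptotically far narrower than any cone about $i\RR$, and there is $r_1$ such that every $z=x+iy\in\clos O_q$ with $|z|\geq r_1$ satisfies $|y|\geq|z|/2$ (no horn point can have bounded $\Im z$ and large modulus, since then $|\Re z|\leq q(iy)$ would itself be bounded by monotonicity).

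Next comes the pointwise estimate on the horn. For $z=x+iy\in\clos O_q$,
$$\Bigl|\Re\frac1z\Bigr|=\frac{|x|}{x^2+y^2}\leq\frac{q(iy)}{y^2},$$
and for $|z|\geq r_1$, using $|y|\geq|z|/2$ together with the monotonicity of $q$ (so $q(iy)\leq q(i|z|)$ when $y>0$ and $q(iy)\leq q(-i|z|)$ when $y<0$), the right-hand side is at most $4\,q(i|z|)/|z|^2$ on the upper part and $4\,q(-i|z|)/|z|^2$ on the lower part of the horn.

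The mass estimate is then a dyadic layer-cake sum. Since $\eta\geq0$ and $\supp\eta\subset\clos O_q$, the integral in \eqref{cB} equals $\int_{\clos O_q\setminus D(r_1)}|\Re\tfrac1z|\dd\eta$ plus a finite term over $D(r_1)\setminus D(r_0)$; I treat the upper part $\clos O_q\cap\CC^{\up}$, the lower being identical with $q(-iy)$ in place of $q(iy)$. Splitting into annuli $A_n=\{2^n\leq|z|<2^{n+1}\}$, on $A_n$ the bound of the previous paragraph is at most $4\,q(i2^{n+1})/2^{2n}$, while $\type_1[\eta]<+\infty$ gives $\eta(A_n)\leq\eta\bigl(\overline D(2^{n+1})\bigr)\leq C\,2^{n+1}$ for large $n$. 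Hence the upper-horn integral is dominated by $\const\sum_n q(i2^{n+1})2^{-n}=\const\sum_m q(i2^m)2^{-m}$, and the comparison $q(i2^m)2^{-m-2}\leq\int_{2^m}^{2^{m+1}}q(iy)y^{-2}\dd y$ (again from monotonicity of $q$) bounds this sum by $\const\,J(q)<+\infty$. Adding the lower-horn part, likewise controlled by $J(q)$, establishes \eqref{cB}; the two-sided form \eqref{Oy} then follows from \eqref{cB} through Proposition \ref{pr:lm}, exactly as recorded just after \eqref{cB}.

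I do not expect a genuine obstacle here: the proof is a soft combination of the horn geometry, the finite-density mass bound, and the integral test \eqref{Jq}. The only points needing care are bookkeeping, namely keeping the direction of monotonicity of $q$ straight on the negative imaginary axis (where being decreasing on $-\RR_*^+$ means $q(iy)$ grows with $|y|$, so $\rho\mapsto q(-i\rho)$ is increasing for $\rho>0$), and justifying the replacement of $y^2$ by $|z|^2$, which rests on $\clos O_q$ containing no sequence with $|z|\to\infty$ and $\Im z$ bounded. An equivalent and perhaps cleaner finish replaces the dyadic sum by the Stieltjes integration by parts $\int\frac{q(i\rho)}{\rho^2}\dd\eta^{\up}(\rho)$, where $\eta^{\up}(\rho):=\eta\bigl(\clos O_q\cap\CC^{\up}\cap\overline D(\rho)\bigr)$: the boundary term $\tfrac{q(iR)}{R^2}\eta^{\up}(R)\leq\tfrac{Cq(iR)}{R}\to0$ vanishes since $q(iR)=o(R)$, and the surviving integral is once more bounded by a constant times $J(q)$.
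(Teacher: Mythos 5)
Your proof is correct and follows essentially the same route as the paper's: the pointwise bound $\bigl|\Re\frac1z\bigr|\leq \const\cdot\bigl(q(i|z|)+q(-i|z|)\bigr)/|z|^2$ on $\clos O_q$, the finite-density bound $\eta^{\rad}(t)=O(t)$, and $J(q)<+\infty$, assembled by a summation — your closing Stieltjes integration by parts in $\dd\eta^{\rad}$ is exactly what the paper does, and the dyadic-annulus version is an equivalent discretization of it. The only cosmetic difference is your detour through $\frac{|x|}{x^2+y^2}\leq\frac{q(iy)}{y^2}$ followed by the comparability $|y|\geq|z|/2$ on the horn (whose justification you state only for bounded $\Im z$, though the full claim does follow from $|\Re z|\leq q(i\Im z)=o(|z|)$); the paper sidesteps this entirely by writing $\frac{|x|}{x^2+y^2}=\frac{|x|}{|z|^2}\leq\frac{q(i\Im z)}{|z|^2}$ and then invoking monotonicity of $q$ to pass to $q(\pm i|z|)$.
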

\begin{proof}[Доказательство леммы \ref{la2}] По определению \eqref{df:dDlm+}
\begin{multline}\label{leta}
l_{\eta}^{\rght}(1,+\infty)\overset{\eqref{df:dDlm+}}{=} 
\int_{\substack{|z|>1,\\ \Re z >0}}\Re \frac{1}{ z} \dd \eta (z)\\
\overset{\eqref{eta1}}{\leq}  
\int_{\substack{| |z|\geq 1\\ \Re z >0}} 
\frac{q(i\Im z)+q(-i\Im z)}{|z|^2} \dd \eta(z)\\
\leq \int_{\substack{| |z|\geq 1\\ \Re z >0}} 
\frac{q(i|z|)+q(-i|z|)}{|z|^2} \dd \eta(z)\overset{\eqref{df:nup}}{=}
\int_1^{+\infty} \frac{q(iy)+q(-iy)}{y^2} \dd \eta^{\rad}(y) \\
=\int_1^{+\infty} \frac{Q(y)}{y^2} \dd \eta^{\rad}(y), \quad
Q(y):=q(iy)+q(-iy),
\end{multline}
где согласно {\rm {\bf a}\ref{b3iii})} и  \eqref{Jq}
 функция $Q$ возрастающая на $\RR_*^+$,  $J(Q)<+\infty$ и $Q(y)=o(y)$, $y\to +\infty$. Для правой части \eqref{leta} имеем
\begin{multline}\label{QI}
\int_1^{+\infty} \frac{Q(y)}{y^2} \dd \eta^{\rad}(y)=
-\int_1^{+\infty} {Q(y)} \dd \int_y^{+\infty}\frac1{t^2} \dd \eta^{\rad}(t)\\
=Q(1)\int_1^{+\infty}\frac1{t^2} \dd \eta^{\rad}(t)
+\int_1^{+\infty}  \int_y^{+\infty}\frac1{t^2} \dd \eta^{\rad}(t)  \dd {Q(y)}\\
\leq \const +\const \int_1^{+\infty} \frac{1}{y}\dd Q(y)
\leq \const +J(Q) <+\infty, 
\end{multline}
Из \eqref{leta}--\eqref{QI} имеем  $l_{\eta}^{\lft}(1,+\infty)<+\infty$.
 Аналогично для $l_{\eta}^{\lft}(1,+\infty)$. 
\end{proof}
Применение леммы \ref{la2} к мере $\eta=\mu_M^q$ из \eqref{muMq} даёт
$l_{\mu_M^q}(r,R)=O(1)$ для всех $1\leq r<R<+\infty$, откуда согласно  
\eqref{lUMqm} получаем 
\begin{multline}\label{lUMqml}
l_{\nu}(r,R)\overset{\eqref{lUMqm}}{\leq}
l_{\mu}(r,R)+O(1)+J(r,R;q)+O(1)\\
\overset{\eqref{Jq}}{\leq}
l_{\mu}(r,R)+O(1)+O(1)+O(1), \quad 1\leq r<R<+\infty,  
\end{multline}
что и доказывает импликацию 
{\bf a}\ref{a3}$\Rightarrow${\bf a}\ref{a2}. 

\subsubsection{\bf Завершение {\bf b}\ref{b3}$\overset{\mathbf{?}}{\Rightarrow}${\bf b}\ref{b2}} Будет использована
\begin{lemma}\label{lab}
Пусть функция $q$  вида   \eqref{q0}, 
a $\eta$ --- мера из \eqref{eta1}. Тогда 
найдутся функция $m$ вида 
\eqref{m0} и число  $C\in \RR$, с которыми 
\begin{equation}\label{eta0}
J(r,R;q)+l_\eta(r,R)\leq m(r)\log \frac{R}{r}+C\quad \text{для всех $1\leq r<R<+\infty$.}
\end{equation}
\end{lemma}
\begin{proof}[Доказательство леммы \ref{lab}]  Изменим функцию $q$, положив 
\begin{equation}\label{qy}
k(y):=\sup_{t\geq |y|} \frac{q(it)+q(-it)}{t}, \quad
q(iy):=k(y) |y|,  \quad y\in \RR_*, 
\end{equation}
где по построению \eqref{qy} функция  
\begin{equation}\label{k0}
k \text{ \it  четная и убывающая на\/ } \RR_*^+, \quad  
\lim_{y\to \pm\infty} k(y)=0.
\end{equation}
При этом условия вида \eqref{q0} и \eqref{eta1} для новой функции $q$ из 
\eqref{qy} сохраняются и  для всех $1\leq r<R<+\infty$ имеем 
\begin{equation}\label{k}
J (r,R;q)\overset{\eqref{qy},\eqref{k0}}{=}2\int_r^R \frac{k(y)|y|}{y^2} \dd y \leq 2k(r)\log\frac{R}{r}. 
\end{equation}
Для меры $\eta$ из  \eqref{eta1} подобно \eqref{leta} достаточно получить \eqref{eta0}
для правой логарифмической функции интервалов $l_{\eta}^{\rght}(r,R)$. 

В обозначении $\overline O_q\overset{\eqref{Oq},\eqref{qy}}{:=}\clos O_q$ имеем 
\begin{multline}\label{rad0}
l_{\eta}^{\rght}(r,R)
\overset{\eqref{df:dDlm+},\eqref{eta1},\eqref{Oq}}{=} 
\int_{\substack{r<|z|\leq R,\\z\in \overline O_q, \Re z >0}}\Re \frac{1}{ z} \dd \eta (z)
\\
\overset{\eqref{eta1},\eqref{Oq}}{\leq}  
\int_{\substack{r<|z|\leq R,\\z\in \overline O_q,\Re z >0}}\frac{k(|\Im z|)|\Im z|}{|z|^2} \dd \eta(z)
\\
\leq\int_{\substack{r<|z|\leq R,\\ z\in \overline O_q}}\frac{k(|\Im z|)|z|}{|z|^2} \dd \eta(z)
\leq \sup_{\stackrel{|z|\geq r,}{z\in \overline O_q}}k(|\Im z|) \int_r^R\frac{1}{t} \dd \eta^{\rad}(t)
\\
\overset{\eqref{eta1}}{\leq}  \const \sup_{\stackrel{|z|\geq r}{{z\in \overline  O_q}}}k(|\Im z|) \log \frac{R}{r}+\const,
\end{multline}
где постоянная $\const$ не зависит от $R, r\geq 1$. По построению множества $O_q$ 
в \eqref{Oq} при выборе функций $k$ и $q$ как в  \eqref{qy}--\eqref{k0} легко видеть, что 
\begin{equation}\label{supk}
\sup\bigl\{ k(|\Im z|) \colon |z|\geq r, z\in \overline  O_q \bigr\}=o(1), \quad r\to +\infty.
\end{equation}
Таким образом, из \eqref{k}, \eqref{rad0} и \eqref{supk} получаем \eqref{eta0}.
\end{proof}
Применение леммы \ref{lab} к мере $\eta=\mu_M^q$ из \eqref{muMq} даёт
согласно \eqref{lUMqm} требуемое соотношение \eqref{lZZ+0} с функцией $m$
вида  \eqref{m0}.

\subsubsection{\bf Завершение {\bf c}\ref{c3}$\overset{\mathbf{?}}{\Rightarrow}${\bf c}\ref{c2}}
Пусть $\e\in \RR_*^+$, $q(iy)=\e |y|$, $y\in \RR$. Используем обозначения
\eqref{mud}, оценку \eqref{lUMqm} и существование конечного числа 
\begin{equation}\label{CM}
m=\sup_{t\geq 1} \frac{\mu_M^{\rad}(t)}{t},
\end{equation}
не зависящего от $\e$. Имеют место оценки 
\begin{multline*}
l_{\mu_M^q}(r,R)+J(r,R;q)\leq \e\int_{\stackrel{r<|z|\leq R,}{|\Re z|\leq \e|\Im z|}}
\Re\frac{1}{z} \dd \mu_M(z) +\e\log\frac{R}{r}\\
\leq \int_r^R\frac{\e}{t}\dd \mu_M^{\rad}(t) +\e\log\frac{R}{r}
\overset{\eqref{CM}}{\leq} \e m+
\e \int_r^R \frac{\mu_M(t)}{t^2}\dd t+ \e\log\frac{R}{r}\\
\leq \e m+\e m\log\frac{R}{r}+\e \log\frac{R}{r}
=\e (m+1)\log\frac{R}{r}+\e m, \quad
1\leq r<R<+\infty.
\end{multline*}
 Отсюда в силу произвола в выборе $\e\in \RR_*^+$ из \eqref{lUMqm} получаем 
требуемое \eqref{lZZ+} для любого $\e\in \RR_*^+$. Импликация 
{\bf c}\ref{c3}$\Rightarrow${\bf c}\ref{c2} доказана. 
\begin{remark}\label{rem3_2}
При доказательстве  импликаций  
{\bf x}\ref{a3}${\Rightarrow}${\bf x}\ref{a2} использованы лишь ограничения \eqref{suppm} на меры $\nu,\mu\in \Meas^+(\CC)$, а именно: 
\begin{enumerate}[1)]
\item\label{1_c-} меры $\nu,\mu$ \textit{конечного типа при порядке\/} $1$, что  \textit{необходимо\/}  для существования  $U, M\in \sbh_*(\CC)$ с мерами Рисса  соотв. $\upsilon_U\geq \nu$, $\mu_M\geq \mu$; 
\item\label{2_c-}  $\supp \mu\overset{\eqref{suppm}}{\subset} \CC_{\overline \rght}$, что фактически тоже \textit{необходимо\/} для возможности перехода от 
 функции интервалов $l_{\mu_M}$ к 
$l_{\mu}$ при выводе {\bf x}\ref{a2}.
\end{enumerate}
Представления же {\bf [meas]}\eqref{numu} для мер $\nu,\mu\in \Meas^+(\CC)$
не применялись.  
\end{remark}

\section{Двустороннее выметание на мнимую ось $i\RR$}\label{bal_iR}
\setcounter{equation}{0} 
Сформулированные в этом подразделе факты 
--- сводка основных  результатов второй части \cite{KhII} нашей работы
применительно  к выметанию рода $1$ на систему из двух лучей
$\{i\RR^+, -i\RR^+ \}$. Такое выметание представляет собой один из ключевых этапов доказательства теоремы \ref{th1}  --- основы доказательства теорем  \ref{th2}--\ref{th3}. При этом система двух лучей $\{i\RR^+, -i\RR^+ \}$ как точечное множество отождествляется с мнимой осью $i\RR$.  В частности, когда речь идет о выметании на пару лучей  $\{i\RR^+, -i\RR^+ \}$, то говорим о выметании на мнимую ось $i\RR$. Процедура выметания рода $1$ на мнимую ось $i\RR$ заключается в сочетании двух выметаний рода $1$:  отдельно из правой полуплоскости $\CC_{\rght}$  и из левой полуплоскости $_{\lft}\CC$.  Таким образом, значительная часть результатов настоящего параграфа о двустороннем выметании следует из результатов о выметании рода $q=1$ из верхней полуплоскости $\CC^{\up}$ на $\RR$, изложенных в \cite[3.2, 4.1]{KhII}, а также в  \cite[\S~3]{kh91AA}, \cite[гл.~II]{KhDD92}.

\subsection{Двустороннее выметание на $i\RR$ заряда}
 {\it Двусторонний гармонический заряд рода $1$ для\/ $\CC\setminus i\RR$ в точке\/ $z\in \CC$} определяем на интервалах $[iy_1,iy_2]$, $-\infty<y_1<y_2<+\infty$, как функцию интервалов  \cite[определение 3.1]{kh91AA}, \cite[определение 2.1.1]{KhDD92},
\cite[определение 2.1]{KhII} 
\begin{multline}\label{se:HC+1}
\Omega\bigl(z,[iy_1,iy_2]\bigr)\overset{\text{\cite[(2.1)]{KhII}}}{:=}
\Omega_{\CC\setminus i\RR}^{[1]}\bigl(z,[iy_1,iy_2]\bigr)\\
\overset{\eqref{omega}}{:=}\omega\bigl(z,[iy_1, iy_2]\,\bigr)
-\frac{y_2-y_1}{\pi}  \Bigl|\Re \frac{1}{z}\Bigr| \, , \quad z\neq 0.
\end{multline}
Пусть $\nu\in \Meas(\CC)$ --- заряд конечного типа при порядке $p$ с целой частью   $[p]\leq 1$. Определим, следуя  \cite[определение 3.2]{kh91AA}, \cite[определение 2.1.2]{KhDD92},  \cite[определение 3.1, теорема 1, замечание 3.3]{KhII}, {\it двустороннее выметание\/ $\nu^{\Bal}\in \Meas (i\RR)$ заряда $\nu$ из $\CC\setminus i\RR$ на $i\RR$,\/} которое  через функцию распределения \eqref{nuiR} локально ограниченной вариации на $i\RR$ можно задать как 
 \begin{subequations}\label{df:baliR}
\begin{align}
(\nu^{\Bal})^{i\RR}(iy)
&\overset{\eqref{se:HC+1}}{:=}\int_{(\CC\setminus i\RR)\cap D(r_0)} \omega\bigl(z, [0,iy]\bigr)\dd \nu(z)
\notag \\
&+\int_{(\CC\setminus i\RR)\setminus D(r_0)} \Omega \bigl(z, [0,iy]\bigr)\dd \nu(z)+ \nu ([0,iy])\quad\text{при $y\geq 0$},
\tag{\ref{df:baliR}+}\label{df:baliR+}
\\
(\nu^{\Bal})^{i\RR}(iy)&\overset{\eqref{se:HC+1}}{:=}
-\int_{(\CC\setminus i\RR)\cap D(r_0)} \omega\bigl(z, [iy,0)\bigr)\dd \nu(z)
\notag \\
&-\int_{(\CC\setminus i\RR)\setminus D(r_0)}  \Omega\bigl(z, [iy,0)\bigr)\dd \nu(z)- \nu ([iy,0))\quad\text{при $y<0$}.
\tag{\ref{df:baliR}-}\label{df:baliR-}
\end{align}
\end{subequations} 

\begin{theoB}\label{theoB1} Пусть $\nu\in \Meas(\CC)$ и  ${\type_1}[\nu] \overset{\eqref{fden}}{<}+\infty$. Тогда существует двустороннее выметание $\nu^{\Bal}\overset{\eqref{df:baliR}}{\in} \Meas(i\RR)$, удовлетворяющее условиям\/  \cite[лемма 2.1.2, (1.16)]{KhDD92}, \cite[теорема 3, п.~2]{KhII}
\begin{subequations}\label{iRbnu}
\begin{align}
\bigl|\nu^{\Bal}\bigr|^{\rad}(r)&=O(r\log r), \quad r\to +\infty,
\tag{\ref{iRbnu}$\infty$}\label{iRbnui}
\\
\intertext{и при $0\notin \supp \nu$ \cite[лемма 2.1.2, (1.17)]{KhDD92}, \cite[теорема 1]{KhII}}
\bigl|\nu^{\Bal}\bigr|^{\rad}(r)&=O(r^2), \quad r\to 0.
\tag{\ref{iRbnu}o}\label{iRbnu0}
\end{align}
\end{subequations} 
Если заряд $\nu$ удовлетворяет двустороннему условию Бляшке \eqref{cB2},
то ${\type_1}[\nu^{\Bal}] \overset{\eqref{fden}}{<}+\infty$ \cite[теорема 3.1]{kh91AA}, \cite[теорема 2.2.1]{KhDD92}, \cite[теорема 3, п.~4]{KhII}, а также
\begin{enumerate}[{\rm (i)}]
\item\label{cL} при дополнительном условии Линделёфа рода $1$ из \eqref{con:Lp+}
как выметание $\nu^{\Bal}\in \Meas(i\RR)$, так и разность зарядов $\nu-\nu^{\Bal}\in \Meas(\CC)$ удовлетворяют условию Линделёфа 
\cite[теорема 3.2]{kh91AA}, \cite[теорема 2.3.1]{KhDD92};

\item\label{aei0} если  заряд $\nu$ еще и отделён от мнимой оси  $i\RR$
в смысле \eqref{con:dism}, 
то найдутся   числа $C\in \RR^+$ и $y_0\in \RR_*^+$, с которыми 
\begin{equation}\label{trnuair}
\bigl|\nu^{\Bal}\bigr|^{i\RR}(iy+ir)-\bigl|\nu^{\Bal}\bigr|^{i\RR}(iy-ir)\leq Cr\; \text{при всех $|y|\geq y_0$, $r\in [0,1]$}
\end{equation}
{\rm (в неявной форме см. \cite[теорема 3.3]{kh91AA}, в явном виде  --- \cite[теорема 2.2.2]{KhDD92}) и \cite[следствие 3.1, п.~(ii), (3.24)]{KhII}).}
\end{enumerate}
\end{theoB}

\subsection{Двустороннее выметание рода\/ $1$ $\delta$-субгармонической функции на мнимую ось\/ $i\RR$}
Пусть $v\in \dsbh_*(\CC)$. Функцию $v^{{\Bal}}\in \dsbh_*(\CC)$ называем {\it выметанием функции\/ $v$ из\/ $\CC\setminus i\RR$ на $i\RR$}, если $v^{{\Bal}}=v$ на\/ $i\RR$ вне полярного множества  и сужение  $v^{{\Bal}}\bigm|_{\CC\setminus i\RR}$ --- гармоническая функция на   $\CC\setminus i\RR$ \cite[определение 4.1]{KhII}.

\begin{theoB}[{\rm \cite[теорема 2.1.1]{KhDD92}}]\label{Balv} Пусть $v\in \dsbh_*(\CC)$ c зарядом Рисса $\nu$ \underline{конечного типа} при порядке $1$ и функция $v$ представима в виде разности 
\begin{equation}\label{reprv}
v:=v_+-v_-, \quad v_{\pm}\sbh_*(\CC), \quad \ord_{\infty}[v_{\pm}]\overset{\eqref{ord}}{\leq} 1.
\end{equation} 
 Тогда \underline{существует выметание\/} $v^{\Bal}\in \dsbh_*(\CC)$ из\/ $\CC\setminus i\RR$ на $i\RR$ c зарядом Рисса $\nu^{\Bal}$, представимое в виде разности 
\begin{equation}\label{reprvB}
v^{\Bal}:=u_+-u_-, \quad u_{\pm}\in \sbh_*(\CC), \quad \ord_{\infty}[u_{\pm}]\overset{\eqref{ord}}{\leq}  1.
\end{equation} 
В дополнение допустим, что в представлении  \eqref{reprv} функции $v_{\pm}$ конечного типа при порядке $1$, т.\,е.  ${\type_1}[v_{\pm}]\overset{\eqref{typev}}{<}+\infty$, а также заряд Рисса $\nu$
удовлетворяет слабому двустороннему условию Бляшке \eqref{cB2} и условию Линделёфа рода $1$ из \eqref{con:Lp+}. Тогда две функции $u_{\pm}$ в \eqref{reprvB} можно выбрать так, что $\type_1[u_{\pm}]<+\infty$, т.\,е. функция $v^{\Bal}\in \dsbh_*(\CC)$ конечного типа, а также, если  заряд $\nu$ еще и \underline{отделён от мнимой оси}  $i\RR$, для некоторых $C\in \RR^+$ и $y_0\in \RR_*^+$ имеем неравенства 
\begin{equation}\label{trnuair_u}
u_\pm (iy)\overset{\eqref{df:MCBc}}{\leq} C_{u_\pm} (iy,1) \leq u_\pm (iy)+ C\quad \text{при всех $|y|\geq y_0$}.
\end{equation}
\end{theoB}

\begin{remark}[{\it Заключительное к настоящей третьей части}]
Оставшиеся недоказанными три импликации 
Полные доказательства импликаций {\bf x}\ref{a3}${\Rightarrow}${\bf x}\ref{a1}  при 
$\bf x:=a,b,c$  для основных теорем \ref{th1}--\ref{th3}  на основе результатов из  
\S~\ref{bal_iR} и их развития будут даны в четвёртой части нашего исследования, оформление которой близк\'о к завершению. 

\end{remark}

\end{document}